\renewcommand{\theequation}{\thesection.\arabic{equation}}
\newtheorem{defn}{Definition}[section]
\newtheorem{lem}{Lemma}[section]
\newtheorem{thm}{Theorem} [section]
\newtheorem{prop}{Proposition} [section]
\newtheorem{coro}{Corollary}[section]
\title{Nonsingular splittings over finite fields
\thanks{Supported By NSF of China No. 11671153 }}
\author{Pingzhi Yuan\thanks{ P. Yuan is with School of  of Mathematical Science, South China Normal University,  Guangzhou 510631, China (email: yuanpz@scnu.edu.cn).}}
    \date{}
\begin{document}
\baselineskip15pt \maketitle
\renewcommand{\theequation}{\arabic{section}.\arabic{equation}}
\catcode`@=11 \@addtoreset{equation}{section} \catcode`@=12

    \begin{abstract} Let $G$ be a finite abelian group.
  We  say that $M$ and $S$ form a \textsl{splitting} of $G$ if every nonzero element $g$ of $G$ has a unique representation of the form $g=ms$ with $m\in M$ and $s\in S$, while $0$ has no such representation.
The splitting is called {\it nonsingular} if $\gcd(|G|, a) = 1$ for any $a\in M$.

  In this paper, we focus our study on  nonsingular splittings of cyclic groups.  We introduce a new notation --direct KM logarithm and we prove that if there is a prime $q$ such that $M$ splits $\mathbb{Z}_q$, then there are infinitely many primes $p$ such  that $M$ splits $\mathbb{Z}_p$.

\end{abstract}

{\bf Keywords:}
splitter sets, perfect codes, factorizations of cyclic groups.

\section{Introduction}

Splittings were first considered in \cite{St67} in connection with the problem of tiling the Euclidean space by translates of certain polytopes composed of unit cubes, called $k$-crosses and $k$-semicrosses, see also \cite{HS86} and \cite{ St84, SS94, Sz86, Sz87, SzS09}. Splitter sets are equivalent to codes correcting
single limited magnitude errors in flash memories (see \cite{BE13}, \cite{EB10, HP03, KBE11, KLNY11, KLY12, M96, OSW18, Sc12, Sc14, YKB13} and the references
therein).

Let $G$ be a finite group, written additively, $M$ a set of integers, and $S$ a subset of $G$.
We  say that $M$ and $S$ form a \textsl{splitting} of $G$ if every nonzero element $g$ of $G$ has a unique representation of the form $g=ms$ with $m\in M$ and $s\in S$, while $0$ has no such representation.
(Here $ms$ denotes the sum of $m$ $s$'s if $m\geq 0$, and $-((-m)s)$ if $m<0$.)
We  write $G\setminus \{0\}=MS$ to indicate that $M$ and $S$ form a splitting of $G$.
$M$  is referred to as the multiplier set and $S$ as the splitter set.
We  also say that $M$ splits $G$ with  a splitter set $S$, or simply that $M$ splits $G$. A splitting $G\setminus \{0\}=MS$ of a finite group $G$ is called {\it nonsingular} if every element of $M$ is relatively prime to $|G|$, otherwise the splitting is called {\it singular}.

The following notations are fixed throughout this paper.

$\bullet$  For an odd prime $p$, a primitive root $g$ modulo $p$, and an
integer $b$ not divisible by $p$, there exists a unique integer
$l\in [0, p - 2]$ such that $g^l\equiv b \pmod{ p}$.  It is known as
the index of $b$ relative to the base $g$, and it is denoted by
$ind_g(b)$.

$\bullet$ For any positive integer $q$, let $\mathbb{Z}_q$ be the ring of integers
modulo $q$ and $\mathbb{Z}_q ^\ast= \mathbb{Z}_q \backslash \{0\}$. For $a\in\mathbb{Z}_q ^\ast$, $o(a)$ denotes the order of $a$ in the multiplicative group $\mathbb{Z}_q ^\ast$.

$\bullet$  Let $a, b$ be integers such that $a\le b$, denote
$$[a, b] = \{a, a + 1, a + 2, \ldots , b\}\,\, \mbox{and}$$
$$[a, b]^\ast = \{a, a + 1, a + 2, \ldots, b\}\backslash\{0\}.$$

$\bullet$ Unless additionally defined, we assume that $aT=a\cdot T = \{a\cdot t : t\in T \}$, \,  $A+B=\{a+b, a\in A, b\in B\}$ and $AB=A\cdot B = \{a\cdot b, a\in A, b\in B\}$ for any element
$a$ and any sets $A$ and $B$, where $\cdot$ and $+$ are binary operators.

$\bullet$ For a nonempty set $M$, $|M|$ denotes the number of elements in $M$.
\begin{defn}Let $(G, \cdot)$ be an abelian group (written multiplicatively). If each element $g\in G$ can be expressed uniquely in the form
$$g = a \cdot b, a \in A,\,\, b \in B,$$
then the equation $G = A \cdot B$ is called a {\it factorization} of $G$. A non-empty subset of $G$ is called to be a {\it direct  factor} of $G$ if there exists a subset $B$ such that $G=A\cdot B$ is a factorization. \end{defn}

In 1983, Hickerson \cite{H83} proved the following result.
\begin{prop}{\rm (\cite{H83}, Theorem 2.2.3)}\label{nonsingular}
Let $G$ be a finite group and $M$ a set of nonzero integers.
Then $M$ splits $G$ nonsingularly if and only if $M$ splits $\mathbb{Z}_p$ for each prime divisor $p$ of $|G|$.
\end{prop}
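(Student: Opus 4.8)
The plan is to prove the two implications separately, the converse being the substantial one.

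\emph{Forward implication.} Suppose $M$ splits $G$ (necessarily nonsingularly) with splitter set $S$, and let $p$ be a prime divisor of $|G|$. By Cauchy's theorem $G$ contains a subgroup $H$ of order $p$. For each $m\in M$ we have $\gcd(m,|G|)=1$, so the ``multiplication by $m$'' map $[m]\colon g\mapsto mg$ is an automorphism of $G$; since $H$ is a subgroup it is carried into, hence onto, itself by $[m]$. Consequently, if $h\in H\setminus\{0\}$ is written $h=ms$ with $m\in M$, $s\in S$, then $s=[m]^{-1}(h)\in H$. Thus $H\setminus\{0\}=M\,(S\cap H)$ with representations still unique, i.e.\ $M$ splits $H\cong\mathbb{Z}_p$. (In the non-abelian case one replaces ``automorphism of $G$'' by the observation that $x\mapsto x^m$ is a bijection of the cyclic group $H$.)

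\emph{Converse.} Assume $M$ splits $\mathbb{Z}_p$ for every prime $p\mid|G|$; we may take $G$ abelian as in the setting of this paper. First, the resulting splitting is forced to be nonsingular: if $p\mid m$ for some $m\in M$ and some prime $p\mid|G|$, then $ms\equiv 0\pmod p$ for every $s$, and choosing $s$ in the nonempty splitter set of $\mathbb{Z}_p$ would exhibit a representation of $0$, which is impossible. So $\gcd(m,|G|)=1$ for all $m\in M$, and it remains only to produce a splitter set for $G$. Note also that for each $p\mid|G|$ the elements of $M$ are pairwise distinct modulo $p$ and $|M|\mid p-1$, so ``$M$ splits $\mathbb{Z}_p$'' says precisely that $\overline M:=M\bmod p$ is a direct factor of the cyclic group $\mathbb{Z}_p^{\ast}$.

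The converse is assembled in two steps. \emph{Step 1 (prime powers):} I claim $M$ splits $\mathbb{Z}_{p^k}$ for every $k\ge1$. Partition $\mathbb{Z}_{p^k}\setminus\{0\}$ by $p$-adic valuation; the set of elements of valuation exactly $j$ (for $0\le j\le k-1$) equals $p^j$ times a set of units and is, compatibly with multiplication by elements of $M$, in bijection with $\mathbb{Z}_{p^{k-j}}^{\ast}$. Hence it suffices, for each such $j$, to exhibit a direct complement of $\overline M$ (taken mod $p^{k-j}$) in $\mathbb{Z}_{p^{k-j}}^{\ast}$; lifting these complements to units and multiplying by the corresponding power of $p$ then yields the required splitter set of $\mathbb{Z}_{p^k}$ (elements attached to distinct $j$ have distinct valuations, so no collisions, and none equals $0$). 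To obtain the complements, use that the reduction map $\mathbb{Z}_{p^{l}}^{\ast}\to\mathbb{Z}_{p}^{\ast}$ splits — the prime-to-$p$ part is a complement of the group $1+p\mathbb{Z}_{p^{l}}$ of principal units — so that, given a complement $T$ of $\overline M\bmod p$ in $\mathbb{Z}_p^{\ast}$ furnished by the hypothesis, the set $(1+p\mathbb{Z}_{p^{l}})\times T$ is a direct complement of $\overline M$ in $\mathbb{Z}_{p^{l}}^{\ast}$. (The prime $p=2$ is degenerate: the hypothesis forces $|M|=1$ and the claim is immediate.) \emph{Step 2 (general $G$):} write $G$ as the direct product of its primary components $\mathbb{Z}_{p_i^{a_i}}$; by Step 1, $M$ splits each factor, and one repeatedly applies the following product lemma. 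If $M$ splits $A$ with splitter set $S_A$, $M$ splits $B$ with splitter set $S_B$, and $\gcd(m,|A||B|)=1$ for all $m\in M$, then $M$ splits $A\times B$ with splitter set
$$S:=(S_A\times\{0\})\,\cup\,(\{0\}\times S_B)\,\cup\,\bigl(S_A\times(B\setminus\{0\})\bigr).$$
Its verification is a short case analysis on which coordinates of a target element vanish, using that $[m]$ is an automorphism of $A$ and of $B$ and that $0\notin S_A\cup S_B$; a count gives $|S|=(|A||B|-1)/|M|$, which together with ``$0$ is not represented'' and ``every nonzero element is represented'' forces uniqueness.

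I expect the main obstacle to be Step 1: one must set up the valuation stratification of $\mathbb{Z}_{p^k}$ carefully, track the identifications with the groups $\mathbb{Z}_{p^{k-j}}^{\ast}$, and invoke the elementary but slightly fiddly structure of $\mathbb{Z}_{p^{l}}^{\ast}$ to manufacture the direct complements. The product lemma of Step 2 and the forward implication are comparatively routine.
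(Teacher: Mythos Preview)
The paper does not supply a proof of this proposition; it is quoted from Hickerson \cite{H83} and then used as a black box. There is therefore no in-paper argument to compare your proposal against.

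On its own merits your argument is correct for abelian $G$, which is the only case the paper ever needs. The forward direction in fact works without the abelian hypothesis: if $0\neq h=ms$ lies in a subgroup $H$ of prime order $p$ and $\gcd(m,|G|)=1$, then $s$ and $ms$ have the same order, so $\langle s\rangle=\langle h\rangle=H$ and $s\in H$. For the converse, Step~1 (stratifying $\mathbb{Z}_{p^k}\setminus\{0\}$ by $p$-adic valuation and using the internal decomposition $\mathbb{Z}_{p^\ell}^{\ast}=(1+p\mathbb{Z}_{p^\ell})\cdot K$ with $K\cong\mathbb{Z}_p^{\ast}$ for odd $p$) and the product lemma of Step~2 are both sound; the count $|S|=(|A||B|-1)/|M|$ together with surjectivity and $0\notin MS$ indeed forces uniqueness. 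The one genuine limitation is the one you already flag: the proposition as stated allows non-abelian $G$, while your converse relies on the structure theorem for finite abelian groups. Hickerson's original proof handles arbitrary finite groups, but for the purposes of this paper your restriction to the abelian case is harmless.
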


By the above proposition of Hickerson \cite{H83}, for the study of nonsingular splittings of abelian groups we can restrict the study to the cyclic group  $\mathbb{Z}_p$. In this paper, we focus our study on  nonsingular splittings of $\mathbb{Z}_p$ for some prime $p$.

The arrangement of the paper is as follows: In Section 2, we introduce a new notation. By using a powerful of Kummer and Mills, we prove the main results of this paper in Section 2. In Section 3, we present the related results when $M=[-k_1, k_2]^*$. In Section 4, we give a characterization of  the possible splitter sets $B$ such that $[-1, 5]^*$ splits $\mathbb{Z}_p$
with the splitter set $B$.

\section{Direct KM logarithm and nonsingular splittings}

We apply a powerful theorem first proved by Kummer and
generalized by Mills \cite{[M]} to the splittings of cyclic groups.

A $k$-character $\chi$ on $\mathbb{Z}_p$ ($p$ is a prime) is a homomorphism from $\mathbb{Z}_p^\star$ to
$\mathbb{Z}_k$. Let $p_1, \ldots, p_t$ be distinct primes and let $b_1, \ldots, b_t$ be elements of $\mathbb{Z}_k$.
Mills \cite{[M]} obtained the following necessary and sufficient conditions for the existence
of a prime $p$ and corresponding $k$-character $\chi$ such that $\chi(p_i)= b_i, 1\le  i\le t$.

\begin{thm}\label{th21} (Kummer-Mills) Let $p_1, \ldots, p_t$  be distinct primes and let
$b_1, \ldots, b_t\in \mathbb{Z}_k$. There is an infinite number of primes $p$ and $k$-characters
$\chi : \left(\mathbb{Z}_p\right)^\ast\to \mathbb{Z}_k$ such that $\chi(p_i) = b_i$ for $1 \le i \le t$ if and only if

(1) $k$ is odd,

(2) $k = 2m$ where $m$ is odd and (a) for each $p$ such that $p\equiv1\pmod{4}$ and $p$
divides $m$, $b_i$ is even and (b) for all $p\equiv 3\pmod{4}$ which divide $m$, the corresponding $b_i$ all have the same parity,

(3) $k = 4m$ and for each $p_i$ that divides $m$, $b_i$ is even.
Moreover, if there is one such prime $p$ for which a $k$-character exists with prescribed
values at $p_1, \ldots, p_t$, then there is an infinite number. \end{thm}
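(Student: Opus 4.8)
The route I would take is through class field theory: translate the existence of a $k$-character with prescribed values into a statement about how a prime $p$ splits in a fixed Kummer extension of $\mathbb{Q}$, and then read everything off the Chebotarev density theorem. This automatically yields the ``moreover'' clause, since a Frobenius conjugacy class in a Galois group is either empty or has positive density, so ``one such $p$'' and ``infinitely many such $p$'' are equivalent. Before setting this up I would make two harmless reductions. First, a homomorphism $\chi\colon(\mathbb{Z}_p)^\ast\to\mathbb{Z}_k$ with $\chi(p_i)=b_i$ has image containing $\langle b_1,\dots,b_t\rangle$, so after replacing $k$ by the order of this subgroup (and checking that conditions (1)--(3) for $k$ and for that order agree on tuples lying inside the subgroup) I may assume $\chi$ is surjective, i.e.\ $k\mid p-1$. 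Second, for such $p$ with a chosen primitive root $g$ one has $\chi(a)\equiv c\,\mathrm{ind}_g(a)\pmod k$ for some unit $c\in\mathbb{Z}_k^\ast$, and $\mathrm{ind}_g(p_i)\bmod k$ is, up to one more unit factor, the exponent of the $k$-th power residue symbol $\bigl(\tfrac{p_i}{p}\bigr)_k$; since varying $g$ multiplies the whole tuple $(b_1,\dots,b_t)$ by an arbitrary unit of $\mathbb{Z}_k$ (the reduction $\mathbb{Z}_{p-1}^\ast\to\mathbb{Z}_k^\ast$ is onto) and the eventual constraints will be congruences mod $2$, invariant under odd units, I can ignore this ambiguity throughout.

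Thus the question becomes: for which $(b_1,\dots,b_t)\in(\mathbb{Z}_k)^t$ are there (infinitely many) primes $p\equiv1\pmod k$ with $\bigl(\tfrac{p_i}{p}\bigr)_k=\zeta_k^{b_i}$? I would introduce $L=\mathbb{Q}\bigl(\zeta_k,\,p_1^{1/k},\dots,p_t^{1/k}\bigr)$. For $p\nmid kp_1\cdots p_t$ the prime $p$ is unramified in $L$; it splits completely in $\mathbb{Q}(\zeta_k)$ exactly when $p\equiv1\pmod k$, and in that case the vector of power residue symbols both determines and is determined by the Frobenius class $\mathrm{Frob}_p$ in $H:=\mathrm{Gal}(L/\mathbb{Q}(\zeta_k))$, which Kummer theory realises as a subgroup $\Gamma\le(\mathbb{Z}_k)^t$ via $\sigma\mapsto\bigl(a_i\bigr)$, where $\sigma(p_i^{1/k})=\zeta_k^{a_i}p_i^{1/k}$. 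By Chebotarev the realisable tuples are precisely the elements of $\Gamma$ (modulo the unit action above), each realised by a positive density of primes. So everything reduces to \emph{computing $\Gamma$}, and I claim $\Gamma$ is cut out by exactly conditions (1)--(3).

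The computation of $\Gamma$ is the heart of the proof and the step I expect to fight with. The containment $\Gamma\le(\mathbb{Z}_k)^t$ is automatic; $\Gamma=(\mathbb{Z}_k)^t$ would hold iff $p_1,\dots,p_t$ are independent in $\mathbb{Q}(\zeta_k)^\ast/(\mathbb{Q}(\zeta_k)^\ast)^k$, and the failure of independence --- which produces the conditions --- comes from one mechanism only: the quadratic entanglement $\mathbb{Q}(\sqrt{p_i^\ast})\subseteq\mathbb{Q}(\zeta_{p_i})$, where $p_i^\ast=(-1)^{(p_i-1)/2}p_i$ (sign of the quadratic Gauss sum), together with $\sqrt{-1}\in\mathbb{Q}(\zeta_4)$. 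Unwinding it: for odd $k$ there is no $2$-torsion in $\mu_k$, one cannot extract a square root as a $k$-th power, no such relation enters $L$, and $\Gamma=(\mathbb{Z}_k)^t$ --- case (1). For $k=2m$ with $m$ odd, $\mathbb{Q}(\zeta_k)=\mathbb{Q}(\zeta_m)$ contains $\sqrt{p_i^\ast}$ precisely for $p_i\mid m$ but not $\sqrt{-1}$; writing $\sigma(p_i^{1/2})=(-1)^{a_i}p_i^{1/2}$ and forcing the relevant square roots (which lie in $\mathbb{Q}(\zeta_k)$) to be $\sigma$-fixed gives ``$b_i$ even'' when $p_i\equiv1\pmod4$ and ``the $b_i$ with $p_i\equiv3\pmod4$ of common parity'' (because then only $\sqrt{-p_i}$, hence only products $\sqrt{p_ip_j}=\sqrt{(-p_i)(-p_j)}$, are available) --- case (2)(a),(b). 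For $4\mid k$, $\sqrt{-1}\in\mathbb{Q}(\zeta_k)$, hence $\sqrt{p_i}\in\mathbb{Q}(\zeta_k,\zeta_{p_i})$ for \emph{every} $p_i\mid m$, giving the uniform condition ``$b_i$ even for each $p_i\mid m$'' --- case (3). The real work is proving these are the \emph{only} relations, i.e.\ a Kummer-theoretic (genus-theory--flavoured) statement: a product $\prod p_i^{e_i}$ becomes a $k$-th power in $\mathbb{Q}(\zeta_k)$ --- equivalently, the Kummer pairing degenerates --- only for the reasons just listed; this rests on the facts that a quadratic subfield of $\mathbb{Q}(\zeta_k)$ is ramified only at primes dividing $k$, and that $\mathbb{Q}(\alpha^{1/\ell})$ with $\ell$ an odd prime and $\alpha\in\mathbb{Q}^\ast$ a non-$\ell$-th power is never abelian over $\mathbb{Q}$. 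Once $\Gamma$ is pinned down, matching its defining congruences against (1)--(3) and invoking Chebotarev gives both directions simultaneously, the ``moreover'' clause included. (Mills's original argument sidesteps Chebotarev in favour of Dirichlet's theorem on primes in arithmetic progressions plus explicit quadratic and biquadratic reciprocity, which is a hands-on incarnation of the same computation of $\Gamma$.)
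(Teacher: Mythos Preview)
The paper does not prove this theorem; it is quoted from Mills~\cite{[M]} and used as a black box throughout (the paper's own contributions begin with Definition~2.1 and Proposition~\ref{mainprop}). So there is no in-paper argument to compare your proposal against.

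That said, your route is a correct and standard modern proof. Realising the tuple $(\chi(p_1),\dots,\chi(p_t))$ as the Frobenius at $p$ inside $\mathrm{Gal}(L/\mathbb{Q}(\zeta_k))$ for the Kummer extension $L=\mathbb{Q}(\zeta_k,p_1^{1/k},\dots,p_t^{1/k})$, invoking Chebotarev, and reducing everything to the computation of this Galois group as a subgroup $\Gamma\le(\mathbb{Z}_k)^t$ is exactly how one would do it today. Your identification of the sole obstruction to $\Gamma=(\mathbb{Z}_k)^t$ --- the quadratic entanglements $\sqrt{p_i^\ast}\in\mathbb{Q}(\zeta_{p_i})$ together with $\sqrt{-1}\in\mathbb{Q}(\zeta_4)$ --- is on target, and the case split genuinely recovers conditions (1)--(3). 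As you yourself remark parenthetically, Mills's original 1963 argument avoids Chebotarev in favour of Dirichlet's theorem on primes in progressions combined with explicit quadratic and biquadratic reciprocity; this is effectively a hands-on computation of the same group $\Gamma$ and is more elementary but less conceptual. Your approach has the advantage of explaining \emph{why} the constraints are purely mod~$2$ phenomena tied to primes dividing $k$. The two preliminary reductions you flag (to surjective $\chi$, and absorbing the unit ambiguity from the choice of primitive root) do require the bookkeeping you allude to --- in particular, checking that conditions (1)--(3) behave well under replacing $k$ by a divisor --- but there is no genuine obstacle there.
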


If $p_1=-1$, then we have the following result.

\begin{thm}\label{th22} (Kummer-Mills) Let $p_1=-1$,  $p_2 \ldots, p_t$  be distinct primes and let
$b_1, \ldots, b_t\in \mathbb{Z}_k$. There is an infinite number of primes $p$ and $k$-characters
$\chi : \left(\mathbb{Z}_p\right)^\ast\to \mathbb{Z}_k$ such that $\chi(p_i) = b_i$ for $1 \le i \le t$ if and only if

(1) $k = 2m$ where $m$ is odd and (a) for each $p$ such that $p\equiv1\pmod{4}$ and $p$
divides $m$, $b_i$ is even,  and (b)$b_1\equiv k/2\pmod{k}$ is odd and  for all $p\equiv 3\pmod{4}$ which divide $m$,  all the corresponding $b_i$are odd,

(2) $k = 4m$ where $m$ is odd and and (a) for each odd prime $p_i$ that divides $m$, $b_i$ is even and (b) $b_1\equiv k/2\pmod{k}$ and $b_i$ is odd for $p=2$.

(3) $k = 8m$ and for each prime $p_i$ that divides $m$, $b_i$ is even.
Moreover, if there is one such prime $p$ for which a $k$-character exists with prescribed
values at $p_1, \ldots, p_t$, then there is an infinite number. \end{thm}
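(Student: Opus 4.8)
The plan is to deduce Theorem \ref{th22} from Theorem \ref{th21} by isolating exactly what the value at $-1$ costs. Since $(-1)^2=1$ in $\mathbb Z_p^\ast$, any $k$-character satisfies $2b_1=2\chi(-1)\equiv 0\pmod k$, so $b_1\in\{0,k/2\}$; the interesting case is $b_1=k/2$, which forces $k$ to be even — this is why every case in the statement has $2\mid k$. Write $k=2^{a}k_0$ with $k_0$ odd and $a\ge 1$, and split a $k$-character by the Chinese Remainder Theorem as $\chi=(\chi_2,\chi_0)$ with $\chi_2\colon\mathbb Z_p^\ast\to\mathbb Z_{2^a}$ and $\chi_0\colon\mathbb Z_p^\ast\to\mathbb Z_{k_0}$. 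Because $-1$ has order $2$ and $k_0$ is odd, $\chi_0(-1)=0$ automatically, so the prescription $\chi(-1)=k/2$ is \emph{equivalent} to the single $2$-primary condition $\chi_2(-1)=2^{a-1}$. The odd part $\chi_0$ is then governed by Theorem \ref{th21} with the odd modulus $k_0$, which imposes no constraints on the $b_i\bmod k_0$; hence all the consistency conditions of Theorem \ref{th22} must come from the $2$-part.

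Next I would analyse $\chi_2(-1)=2^{a-1}$ directly via a primitive root and the isomorphism $\mathbb Z_p^\ast\cong\mathbb Z_{p-1}$. One finds that this equality can hold only when $v_2(p-1)\le a$, and in that case the image of $\chi_2$ has order $2^{\,v_2(p-1)}$; consequently each prescribed $b_i$ must be divisible by $2^{\,a-v_2(p-1)}$, which pins down the admissible parities of the $b_i$. The three cases $a=1$ ($k=2m$), $a=2$ ($k=4m$) and $a\ge 3$ ($k=8m$) of the statement correspond exactly to this trichotomy. Translating the conditions ``$\chi_2(-1)=2^{a-1}$'' and ``$\chi_2(p_i)\equiv b_i\pmod{2^a}$'' into congruences on $p$ by quadratic, quartic and higher reciprocity at the prime $2$, together with the relation $\sqrt{p_i^{\ast}}\in\mathbb Q(\zeta_{p_i})$ with $p_i^{\ast}=(-1)^{(p_i-1)/2}p_i$ — which ties the quadratic (and, for $p_i\mid m$, higher) character of an odd $p_i$ dividing $m$ to the cyclotomic data — one recovers precisely (1)(a)(b), (2)(a)(b) and (3). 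Conversely, under these conditions the system of congruence and power-residue conditions on $p$ is consistent, so Dirichlet's theorem together with the Kummer--Mills construction for $\chi_0$ produces a prime $p$; and since the full condition on $p$ is a finite conjunction of congruence and power-residue conditions, once one such $p$ exists there are infinitely many, which gives the last clause.

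The step I expect to be the main obstacle is the $2$-primary bookkeeping in the cases $a=2$ and $a\ge 3$: one must verify that ``$\chi_2(-1)=2^{a-1}$ together with the prescribed $\chi_2(p_i)$'' is realizable for infinitely many $p$ exactly under (2)(b) and (3), which requires care with $-1$ and $2$ as $2^{a}$-th power residues modulo $p$ and with their interaction with the odd primes dividing $m$. A secondary technical point is to keep the ``infinitely many'' outputs of Theorem \ref{th21} and of Dirichlet's theorem compatible when one intersects the Kummer families with the fixed $2$-adic residue classes of $p$; the cleanest remedy is to re-run Mills' underlying cyclotomic/Kummer argument with the enlarged modulus $8k\prod_{i}p_i$ rather than to quote Theorem \ref{th21} as a black box.
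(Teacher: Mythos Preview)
The paper does not prove Theorem~\ref{th22}. It is stated there as a known result attributed to Kummer and Mills (see the citation \cite{[M]} in the paper), exactly parallel to Theorem~\ref{th21}, and is used as a black box in what follows. So there is no ``paper's own proof'' to compare your proposal against.

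That said, a brief comment on your sketch. The reduction you outline --- splitting $\chi$ via the Chinese Remainder Theorem into an odd part governed by Theorem~\ref{th21} and a $2$-primary part that carries the constraint $\chi(-1)=k/2$ --- is the natural strategy, and your observation that $2\chi(-1)\equiv 0$ forces $b_1\in\{0,k/2\}$ correctly explains why $k$ must be even. Your own caveat is well placed: the genuine work lies entirely in the $2$-primary case analysis, and you have not actually carried it out. In particular, the step where you ``translate the conditions $\chi_2(-1)=2^{a-1}$ and $\chi_2(p_i)\equiv b_i\pmod{2^a}$ into congruences on $p$ by quadratic, quartic and higher reciprocity'' is doing all the heavy lifting, and as written it is an assertion rather than an argument. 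If you want a self-contained proof you should consult Mills' original paper, where the cyclotomic machinery is set up once and handles both Theorem~\ref{th21} and Theorem~\ref{th22} uniformly; quoting Theorem~\ref{th21} and then patching in the $2$-part separately, as you propose, is workable but --- as you yourself note in the last paragraph --- requires re-entering Mills' argument anyway to keep the ``infinitely many'' conclusions compatible.
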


Let $k_1$ and $k_2$ be non-negative integers with $k_1\le k_2$ and $k_1+k_2=k\ge3$, a {\it logarithm \, function} (of length $k$) is a function  $f: [-k_1, k_2]^\ast\to \mathbb{Z}_k$ such that
$f(xy) = f(x)+ f(y)$ whenever $ x, y, xy \in[-k_1, k_2]^\ast$. A {\it logarithm} is a bijective logarithmic function. Logarithms are used in lattice tilings, group theory, number theory, coding theory, and $k$-radius sequences, see  Blackburn and Mckee \cite{BM12} and the references therein.

Let $p$ be a prime such that $p\equiv1\pmod{k}$, a $k$-character $\chi$ on $\mathbb{Z}_p$ ($p$ is a prime) is a homomorphism from $\mathbb{Z}_p^\star$ to
$\mathbb{Z}_k$. If the $k$-character $\chi$ is primitive, i.e., the homomorphism is an epimorphism, then there is a primitive $k$th root of unity $\zeta$  in $\mathbb{Z}_p$ such that $\chi$ is determined by the equation $x^{(p-1)/k}\equiv \zeta^{\chi(x)}\pmod{p}$. Observe that the above determined function $\chi$ satisfies $\chi(ab)\equiv \chi(a)+\chi(b)\pmod{k}$ for any $a, b\in\mathbb{Z}_p$ and the image of $\chi$ is $\mathbb{Z}_k$.  We say (following Galovich and
Stein [6], Blackburn and Mckee \cite{BM12}) that $f$ is a Kummer-Mills-logarithm, or KM-logarithm, if it arises in this
way for some prime $p$.

Now we introduce the following general definitions.
\begin{defn}Let $M$ be a finite subset of nonzero integers.  An injective function $f: M\to \mathbb{Z}_k$ is called a {\it direct logarithm} if
$f(xy) = f(x)+ f(y)$ when $ x, y, xy \in M$ and $f(M)$ is a direct factor of $\mathbb{Z}_k$. A {\it direct logarithm} that meets the
conditions of Theorems \ref{th21} and \ref{th22} is called a {\it direct KM-logarithm}.\end{defn}

It is easy to see that $|M||k$ and the direct KM-logarithm is the usual   KM-logarithm when $|M|=k$. Hence it is a generalization of the usual   KM-logarithm. We first prove the following useful proposition.

\begin{prop}\label{mainprop} Let $M$ be a finite set of nonzero integers. Suppose that there is a direct  logarithm $f: M\to \mathbb{Z}_k$ for some positive integer $k$ with $|M||k$, then the function $g:
M\to \mathbb{Z}_{8k}$ defined by $g(m)\equiv8f(m)\pmod{8k}$ is a direct KM logarithm. \end{prop}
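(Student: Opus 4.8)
The plan is to verify, one by one, the conditions defining a direct KM-logarithm for $g$: that $g$ is injective, that it satisfies the logarithm identity on $M$, that $g(M)$ is a direct factor of $\mathbb{Z}_{8k}$, and that the values of $g$ obey the numerical hypotheses of Theorems \ref{th21} and \ref{th22}. The first two are formal. The map $\varphi\colon\mathbb{Z}_k\to\mathbb{Z}_{8k}$, $\varphi(a)=8a$, is a well-defined injective homomorphism (if $8a\equiv 8b\pmod{8k}$ then $a\equiv b\pmod k$) whose image is the unique subgroup $8\mathbb{Z}_{8k}$ of $\mathbb{Z}_{8k}$ of order $k$; since $g(m)=\varphi(f(m))$ and $f$ is injective, $g$ is injective, and for $x,y,xy\in M$ we get $g(xy)=8f(xy)=8f(x)+8f(y)=g(x)+g(y)$ in $\mathbb{Z}_{8k}$.

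For the direct-factor clause I would transport a complement of $f(M)$ through $\varphi$. Choose $C\subseteq\mathbb{Z}_k$ with $\mathbb{Z}_k=f(M)\oplus C$ a factorization; then $\varphi$ yields a factorization $8\mathbb{Z}_{8k}=\varphi(f(M))\oplus\varphi(C)=g(M)\oplus\varphi(C)$. As $[0,7]$ is a transversal of $8\mathbb{Z}_{8k}$ in $\mathbb{Z}_{8k}$ (two integers are congruent modulo $8\mathbb{Z}_{8k}$ iff they differ by a multiple of $8$), one has $\mathbb{Z}_{8k}=[0,7]\oplus 8\mathbb{Z}_{8k}$, and substituting gives $\mathbb{Z}_{8k}=g(M)\oplus\bigl([0,7]+\varphi(C)\bigr)$. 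That this is a genuine factorization is seen by reducing an equality of representatives modulo $8\mathbb{Z}_{8k}$ to pin down the $[0,7]$-component and then invoking the factorization of $8\mathbb{Z}_{8k}$; the cardinalities multiply to $|M|\cdot 8\cdot(k/|M|)=8k$, confirming it. Hence $g(M)$ is a direct factor of $\mathbb{Z}_{8k}$, so $g$ is a direct logarithm.

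It remains to check the numerical hypotheses, and here the point is simply that every value of $g$ lies in $8\mathbb{Z}_{8k}$, hence is even. Since $8\mid 8k$, the modulus $8k$ falls in case (3) of Theorem \ref{th21} (with $8k=4\cdot 2k$) and, when $-1\in M$, also in case (3) of Theorem \ref{th22} (with $8k=8\cdot k$); in either case the sole constraint on the prescribed data is that the value at each relevant prime divisor of $m$ be even, which holds automatically. (If $1,-1\in M$ then $2f(-1)\equiv 0\pmod k$ as well, so $g(-1)=8f(-1)\in\{0,4k\}$, the only possible values of an $8k$-character at $-1$.) Therefore $g$ satisfies the conditions of both theorems and is a direct KM-logarithm.

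The step I expect to demand the most care is the direct-factor property, where one must keep the literal integer set $\varphi(C)\subseteq\mathbb{Z}_{8k}$ separate from the coset bookkeeping and check cleanly that $[0,7]+\varphi(C)$ splits off. The conceptual heart of the argument, by contrast, is the nearly trivial observation that scaling by $8$ pushes the image into $8\mathbb{Z}_{8k}$ while at the same time making the new modulus divisible by $8$, so that all the parity obstructions in Theorems \ref{th21}--\ref{th22} — those attached to $-1$, to the prime $2$, and to primes $\equiv 3\pmod 4$ — vanish simultaneously; this is precisely why the factor $8$, rather than $2$ or $4$, is used.
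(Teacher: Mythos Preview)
Your proof is correct and follows essentially the same approach as the paper: both build the complement of $g(M)$ in $\mathbb{Z}_{8k}$ as $\{0,1,\ldots,7\}+8B$ (your $[0,7]+\varphi(C)$), and both dispose of the KM conditions by noting that every value $g(m)=8f(m)$ is even. You are simply more explicit than the paper, which compresses the KM verification to a single sentence and carries out the factorization check directly rather than via the embedding $\varphi$.
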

\begin{proof} Observe that $g(m)$ is even for any $m\in M$, so it suffices to show that $g$ is a direct logarithm. Since $f$ is a direct logarithm, so $f$ is injective and there is a subset of $\mathbb{Z}_k$ such that $f(M)+B=\mathbb{Z}_k$ is a factorization. Let $B_1=\{8b\pmod{8k}, b\in B\}$. We will show that
$$g(M)+B_1+\{0, 1, 2, 3, 4, 5, 6, 7\}=\mathbb{Z}_{8k}$$
is a factorization. For any element $a\in \mathbb{Z}_{8k}$, $0\le a<8k$, it is easy to see that
$$a=a_1+8t, \quad a_1\in \{0, 1, 2, 3, 4, 5, 6, 7\}, \,\, 0\le t<k$$ and the representation is unique. Recall that $f(M)+B=\mathbb{Z}_k$ is a factorization, so we have $t=f(m)+b, b\in B, m\in M$ and the representation is unique. Hence
$$a=a_1+8f(m)+8b=a_1+g(m)+b_1,\quad a_1\in \{0, 1, 2, 3, 4, 5, 6, 7\},\quad  b_1\in B_1,$$ and the representation is unique, as required.\end{proof}

\begin{thm}\label{main} Let $M$ be a finite set of nonzero integers. Suppose that there is a direct KM logarithm $f: M\to \mathbb{Z}_k$ for some positive integer $k$ with $|M||k$, then there are infinitely
many primes $p$ such  that $M$ splits $\mathbb{Z}_p$. \end{thm}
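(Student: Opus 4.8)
The plan is to combine the Kummer--Mills theorem (Theorem \ref{th21} or Theorem \ref{th22}, whichever applies to $M$) with the observation that the existence of a primitive $k$-character $\chi$ on $\mathbb{Z}_p$ extending $f$ forces $f$ to behave like an ``index function'' on $M$, and then to use the hypothesis that $f(M)$ is a direct factor of $\mathbb{Z}_k$ to manufacture an explicit splitter set $S\subseteq\mathbb{Z}_p$. Concretely, write $M=\{m_1,\dots,m_r\}$ and let $q_1,\dots,q_s$ be the set of primes (together with $-1$, if some $m_i<0$) occurring in the factorizations of the $m_i$. Since $f$ is a \emph{direct KM logarithm}, its values $f(q_j)$ satisfy the parity/modular conditions in Theorems \ref{th21}/\ref{th22}, so those theorems yield infinitely many primes $p\equiv 1\pmod k$ and $k$-characters $\chi:(\mathbb{Z}_p)^\ast\to\mathbb{Z}_k$ with $\chi(q_j)=f(q_j)$ for every $j$. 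By multiplicativity of both $\chi$ and $f$ on the generators, $\chi(m_i)=f(m_i)$ for all $i$; after discarding finitely many $p$ we may assume $\chi$ is primitive, so it is given by $x^{(p-1)/k}\equiv\zeta^{\chi(x)}\pmod p$ for a primitive $k$th root of unity $\zeta$.

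Next I would build the splitter set. Write $\mathbb{Z}_k=f(M)\oplus B$ (a factorization, guaranteed because $f(M)$ is a direct factor), so $|f(M)||B|=k$ and, since $f$ is injective, $|M|=|f(M)|$ and $|M|\cdot|B|=k$. Fix a generator $g$ of $(\mathbb{Z}_p)^\ast$ chosen so that $g^{(p-1)/k}\equiv\zeta\pmod p$, i.e.\ $\chi(g^\ell)\equiv\ell\pmod k$. Let $H\le(\mathbb{Z}_p)^\ast$ be the subgroup of index $k$, i.e.\ the kernel of $\chi$, which has order $(p-1)/k$. I claim
$$S=\bigcup_{b\in B} g^{b}\,H$$
is a splitter set for $M$ in $\mathbb{Z}_p$. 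Indeed $|M|\cdot|S|=|M|\cdot|B|\cdot\frac{p-1}{k}=p-1$, so a counting argument reduces the claim to showing $0\notin MS$ (automatic, since $p\nmid m_i$ as $p\equiv1\pmod k$ and each prime of $m_i$ is some $q_j<p$) and that the products $m_i s$ are all distinct modulo $p$. If $m_i s\equiv m_{i'} s'$ with $s\in g^bH$, $s'\in g^{b'}H$, apply $\chi$: $f(m_i)+b\equiv f(m_{i'})+b'\pmod k$; the direct-sum decomposition $\mathbb{Z}_k=f(M)\oplus B$ then forces $f(m_i)=f(m_{i'})$ hence $m_i=m_{i'}$ (injectivity), and $b=b'$; dividing, $s\equiv s'$. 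Thus $M$ splits $\mathbb{Z}_p$, and by Theorem \ref{th21}/\ref{th22} this happens for infinitely many $p$.

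The step I expect to be the main obstacle is the bookkeeping around signs and the prime $2$: one must check that the hypothesis ``$f$ meets the conditions of Theorems \ref{th21} and \ref{th22}'' is exactly the right input so that the generators $q_j$ (including $q_j=-1$ and $q_j=2$) really do have prescribable $\chi$-values, and that $\chi(-1)=f(-1)$ is consistent with $\chi(-1)\equiv k/2\pmod k$ when $-1$ is used multiplicatively in $M$ (so that negative elements of $M$ are handled correctly). A secondary technical point is verifying that for all but finitely many of the primes produced, the $k$-character can be taken primitive and that each prime dividing some $m_i$ is genuinely distinct and smaller than $p$, so that no $m_i$ vanishes or collapses modulo $p$; both are routine once the finitely-many-exceptions clause of Kummer--Mills is invoked. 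Everything else is the multiplicativity transport $\chi|_M=f$ and the clean index-subgroup counting above.
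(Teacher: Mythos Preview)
Your proposal is correct and follows essentially the same line as the paper's proof. Both arguments (i) invoke Kummer--Mills to produce infinitely many primes $p$ together with a $k$-character $\chi$ (equivalently, the power-residue map $\varphi(a)=a^{(p-1)/k}$) whose restriction to $M$ agrees with $f$, and then (ii) take the splitter set to be the $\chi$-preimage of a complement $B$ of $f(M)$ in $\mathbb{Z}_k$, verifying directness by applying $\chi$ to a hypothetical collision $m_1s_1=m_2s_2$ and using the factorization $\mathbb{Z}_k=f(M)\oplus B$ together with injectivity of $f$. Your description $S=\bigcup_{b\in B} g^{b}H$ with $H=\ker\chi$ is exactly the paper's $B_1=\varphi^{-1}(B)$ written in coset form; the counting $|M|\cdot|S|=p-1$ and the collision argument are identical. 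The bookkeeping worries you flag (values at $-1$ and at $2$, primitivity of $\chi$) are precisely what the phrase ``meets the conditions of Theorems~\ref{th21} and~\ref{th22}'' in the definition of direct KM logarithm is meant to absorb, and the paper's proof does not unpack them any further than you do.
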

\begin{proof} By the definition of a direct KM logarithm and Kummer-Mills Theorem, there are infinitely
many primes $p$ such  that the restriction of the $k$-th power character map $\varphi: \mathbb{Z}_p^*\to \mathbb{Z}_p^*$ defined by $\varphi(a)\equiv a^{\frac{p-1}{k}t}\pmod{p}, \gcd(t, k)=1$ to $M$ is injective and $\varphi(M)$ is a direct factor of the cyclic group $\varphi(\mathbb{Z}_p^*)$ of order $k$. We will show that for all these primes $p$, $M$ splits $\mathbb{Z}_p$.

Since $\varphi(M)$ is a direct factor of the cyclic group $\varphi(\mathbb{Z}_p^*)$, so there is  a finite subset $B$ of $\varphi(\mathbb{Z}_p^*)$ such that  $1\in B$ and  $\varphi(M)B=\varphi(\mathbb{Z}_p^*)$ is a factorization. Let $B_1=\{a\in\mathbb{Z}_p^*, \varphi(a)\in B\}$. Then $|B_1|=|B|\cdot\frac{p-1}{k}$ and $p-1=|M||B_1|$. Therefore it suffices to show that $MB_1$ is direct. Suppose now that $m_1b_1=m_2b_2, m_1, m_2\in M$ and $b_1, b_2\in B_1$, then
$$ \varphi(m_1)\varphi(b_1)=\varphi(m_2)\varphi(b_2).$$
Notice that $\varphi(m_1), \varphi(m_2)\in \varphi(M)$ and $\varphi(b_1), \varphi(b_2)\in \varphi(B_1)=B$ and $\varphi(M)B=\varphi(\mathbb{Z}_p^*)$ is a factorization. Hence $\varphi(m_1)=\varphi(m_2)$, which implies that $m_1=m_2$ since the restriction of $\varphi$ to $M$ is injective, and thus $b_1=b_2$. This completes the proof. \end{proof}

\begin{thm} Let $M$ be a set of integers. Suppose that there is a prime $q$ such that $M$ splits $\mathbb{Z}_q$, then there are infinitely
many primes $p$ such  that $M$ splits $\mathbb{Z}_p$. \end{thm}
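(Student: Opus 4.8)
The plan is to reduce the statement to Theorem~\ref{main} by producing a direct KM logarithm for $M$ out of the hypothesis that $M$ splits $\mathbb{Z}_q$ for some prime $q$. First I would use the splitting $\mathbb{Z}_q\setminus\{0\}=MS$: since the splitting of a cyclic group by a set of integers is governed by the image of $M$ in $\mathbb{Z}_q^\ast$, I would pass to a primitive root $g$ modulo $q$ and take the index map $\mathrm{ind}_g\colon \mathbb{Z}_q^\ast\to\mathbb{Z}_{q-1}$. Writing $k=q-1$ and $f=\mathrm{ind}_g|_M$, the multiplicativity of the index shows $f(xy)=f(x)+f(y)$ whenever $x,y,xy\in M$, and the fact that $MS$ is a splitting of $\mathbb{Z}_q$ translates into $f(M)+\mathrm{ind}_g(S)=\mathbb{Z}_{q-1}$ being a factorization; in particular $f$ is injective and $f(M)$ is a direct factor of $\mathbb{Z}_k$, so $f$ is a direct logarithm in the sense of the definition, with $|M|\mid k$.

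Next I would invoke Proposition~\ref{mainprop}: from the direct logarithm $f\colon M\to\mathbb{Z}_k$ it produces $g\colon M\to\mathbb{Z}_{8k}$ with $g(m)\equiv 8f(m)\pmod{8k}$, which is again a direct logarithm, and crucially every value $g(m)$ is even. I then need to check that $g$ meets the congruence conditions of Theorems~\ref{th21} and~\ref{th22}, i.e.\ that it is a \emph{direct KM} logarithm. Here the point is that $8k=8(q-1)$ is divisible by $8$, so we are in case~(3) of Theorem~\ref{th21} (with modulus $8m$ where $m$ accounts for the odd part and the factor $8$), and the only requirement there is that $b_i$ be even for each prime $p_i$ dividing the odd part $m$ — which holds because \emph{all} the values $g(m)$ are even. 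If $M$ happens to contain negative integers one uses Theorem~\ref{th22} instead, again in the $k=8m$ branch, where the condition is once more evenness of the prescribed values at the primes dividing $m$; the special role of $p_1=-1$ is absorbed because $g(-1)$ is also even (it equals $8f(-1)$, and $f(-1)$ is the index of $-1$, an integer mod $k$). So $g$ is a direct KM logarithm for $M$ into $\mathbb{Z}_{8k}$ with $|M|\mid 8k$.

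Finally I would apply Theorem~\ref{main} to $g$: it gives infinitely many primes $p$ such that $M$ splits $\mathbb{Z}_p$, which is exactly the conclusion. The main obstacle I expect is the middle step — verifying that multiplying the exponents by $8$ really does land us in a case of the Kummer--Mills theorems with \emph{no} further obstruction. One has to be careful that ``$g(m)$ even for all $m$'' is genuinely sufficient: in Theorem~\ref{th21}(2) and~(3) the parity conditions are all of the form ``$b_i$ is even'' at certain primes, so uniform evenness of the image settles them; the subtlety is only in confirming that the factor $8$ (rather than just $2$ or $4$) guarantees we are never forced into the more delicate case~(2) with its ``same parity'' clause, and that when $-1\in M$ the hypotheses of Theorem~\ref{th22}(3) are met rather than those of its cases~(1) or~(2). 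Once that bookkeeping is done the rest is a direct chain of the three results already established. I will also note at the outset that the case $|M|\le 2$ (so that no nontrivial multiplicative relations exist inside $M$) is handled by the same argument, the logarithm condition being vacuous there.
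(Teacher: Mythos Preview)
Your proposal is correct and follows essentially the same route as the paper's proof: from the splitting of $\mathbb{Z}_q$ you extract, via the index map with respect to a primitive root, a direct logarithm $f\colon M\to\mathbb{Z}_{q-1}$, then apply Proposition~\ref{mainprop} to obtain a direct KM logarithm into $\mathbb{Z}_{8(q-1)}$, and finish with Theorem~\ref{main}. The paper compresses all of this into two sentences, while you spell out the verification that multiplying by $8$ lands in case~(3) of the Kummer--Mills theorems---but that verification is exactly what the proof of Proposition~\ref{mainprop} already provides, so the two arguments coincide.
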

 \begin{proof}Since $q$ is a prime  and $M$ splits $\mathbb{Z}_q$, so restriction of the $q-1$-th power character map $\varphi: \mathbb{Z}_q^*\to \mathbb{Z}_q^*$ defined by $\varphi(a)\equiv a\pmod{p}$ to $M$ is injective and $\varphi(M)$ is a direct factor of the cyclic group $\varphi(\mathbb{Z}_q^*)$ of order $q-1$. It is a direct  logarithm, therefore the result follows from Proposition \ref{mainprop} and Theorem \ref{main}.\end{proof}

To illustrate, let us give two examples.

{\bf Example 2.1:}  For the set $[-4, 4]^*$, we first show that there is no KM logarithm $f:[-4, 4]^*\to \mathbb{Z}_8$. The reason is that: for any logarithm $f:[-4, 4]^*\to \mathbb{Z}_8$, we have $f(-1)=4$, $f(1)=0$. If $f$ were a KM logarithm, then $f(2)$ should be even. It follows that $f(2), f(-2), f(4), f(-4)$ are even, so $f(2), f(-2), f(4), f(-4), f(-1), f(1)$ are even, which contradicts to that $f$ is a logarithm. However, we have the following direct KM logarithm $g:[-4, 4]^*\to \mathbb{Z}_{16}$ given by $g(1)=0$, $g(-1)=8$, $g(2)=2$, $g(4)=4$, $g(-2)=10$, $g(-4)=12$, $g(3)=6$, $g(-3)=14$. We have $g([-4, 4]^*)\bigoplus\{-1, 1\}=\mathbb{Z}_{16}$ and $g(2)$ is even, so $g$ is a direct KM logarithm, which implies that there are infinitely many primes $p$ such that $[-4, 4]^*$ splits $\mathbb{Z}_p$ by Theorem \ref{main}.

{\bf Example 2.2:}  For other cases with $k_1+k_2=8$, we have
$$(-1, 1, 2, 3, 4, 5, 6, 7)\mapsto(4, 0, 1, 6, 2, 3, 7, 5); (-2, -1, 1, 2, 3, 4, 5, 6)\mapsto(5, 4, 0, 1, 6, 2, 3, 7);$$
$$(-3, -2, -1, 1, 2, 3, 4, 5)\mapsto(7, 5, 4, 0, 1, 3, 2, 6)$$
are logarithms from $[-k_1, k_2]^*$ to $\mathbb{Z}_8$. It is easy to see that none of  them is a KM logarithm. By the same argument as in Example 1, there are direct KM logarithms from $[-k_1, k_2]^*$ to $\mathbb{Z}_{16}$. Therefore there are infinitely many primes $p$ such that $[-k_1, k_2]^*$ ($[-k_1, k_2]^*=[-1, 7]^*$ or $[-2, 6]^*$ or $[-3, 5]^*$) splits $\mathbb{Z}_p$.

We say that a prime $p$ is a $k$-radius prime if the following two conditions both hold:

(a) $p\equiv1\pmod{2k}$;

(b) the elements $1^{(p-1)/k}, 2^{(p-1)/k}, \ldots, k^{(p-1)/k}$ in  $\mathbb{Z}_p^*$ are pairwise distinct.

In \cite{BM12},  Blackburn and Mckee  proved that  there is a special KM-logarithm of length $k$ if and only if there are infinitely many $k$-radius primes. Let $k$ be a fixed positive integer, and let $f_{spec}(k)$ be the number of special KM-logarithms of length $k$, then there exists two positive constant $d_k$ and $A_k$ such that the number of $k$-radius primes less
than or equal to $x$ is
$$d_kf_{spec}(k)\frac{x}{\log x} + O(x \exp(-A_k\log x),$$
as $x\to\infty$, where the implied constants depends only on $k$ by Theorem 1 in Elliott \cite{E70}.

 We have

\begin{prop}Let $k$ be a positive integer. Suppose that the prime $p$ is a $k$-radius prime, then both $[1, k]$ and $[-k, k]^\star$ split $\mathbb{Z}_p$.\end{prop}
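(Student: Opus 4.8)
The plan is to produce, from the $k$-radius hypothesis, an explicit factorization of $\mathbb{Z}_p^\ast$ (written multiplicatively) of the form $A\cdot S$ with $A = [1,k] \bmod p$ (respectively $A = [-k,k]^\ast \bmod p$), which is exactly what it means for $A$ to split $\mathbb{Z}_p$. First I would set $\zeta$ to be a primitive $k$-th root of unity in $\mathbb{Z}_p^\ast$ (this exists since $p\equiv 1\pmod{2k}$, so in particular $k\mid p-1$), and consider the $k$-th power residue subgroup $H = \{x^{(p-1)/k}: x\in\mathbb{Z}_p^\ast\}$, which has index $(p-1)/k$ in $\mathbb{Z}_p^\ast$ and is cyclic of order... wait, $H$ has order $k$. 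Condition (b) says precisely that the map $j\mapsto j^{(p-1)/k}$ is injective on $\{1,2,\dots,k\}$, so the images $1^{(p-1)/k},\dots,k^{(p-1)/k}$ are $k$ distinct elements of the order-$k$ group $H$, hence they exhaust $H$. Thus $[1,k]^{(p-1)/k} = H$ as sets.

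Next I would build the splitter set. Let $R$ be a transversal (set of coset representatives) for $H$ in $\mathbb{Z}_p^\ast$, chosen so that $1\in R$; then $\mathbb{Z}_p^\ast = H\cdot R$ is a factorization with $|R| = (p-1)/k$. Since $[1,k]^{(p-1)/k}=H$ is a bijection from $[1,k]$ onto $H$, I claim $\mathbb{Z}_p^\ast = [1,k]\cdot \big(\text{something}\big)$. The cleanest route: take $S_0$ to be a set of $(p-1)/k$ elements, one from each coset of $H$, i.e. $S_0 = R$; then every $g\in\mathbb{Z}_p^\ast$ lies in a unique coset $hH$... hmm, let me think again. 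Every $g$ can be written uniquely as $g = h r$ with $h\in H$, $r\in R$; and $h$ is uniquely $j^{(p-1)/k}$ for a unique $j\in[1,k]$; but I need $g = j\cdot s$, not $g = j^{(p-1)/k}\cdot r$. So instead I should argue at the level of the abstract factorization: $[1,k]$ maps bijectively onto $H$ under $\varphi(x)=x^{(p-1)/k}$, and $\varphi$ is a homomorphism $\mathbb{Z}_p^\ast\to H$ with kernel of size $(p-1)/k$; so $[1,k]$ is a transversal for $\ker\varphi$ in $\mathbb{Z}_p^\ast$ (because $\varphi$ is injective on it and hits every value of $H$). A transversal for a subgroup is a direct factor: $\mathbb{Z}_p^\ast = [1,k]\cdot\ker\varphi$ is a factorization. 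Taking $S = \ker\varphi$ gives the splitting $\mathbb{Z}_p\setminus\{0\} = [1,k]\,S$.

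For $[-k,k]^\ast$, note $-1 = \zeta_0$ where $\zeta_0\in H$ has order $2$ — but $p\equiv 1\pmod{2k}$ guarantees $-1$ is itself a $k$-th power residue only if $2k\mid p-1$ makes $(-1)=g^{(p-1)/2}$ a $(p-1)/k$-th power, which holds since $(p-1)/k$ divides $(p-1)/2$... I would instead observe directly: $\varphi(-j) = (-1)^{(p-1)/k}\varphi(j)$, and $(-1)^{(p-1)/k}$ is either $1$ or $-1$; if it equals $1$ then $\varphi(-j)=\varphi(j)$ and the map is two-to-one on $[-k,k]^\ast$ onto $H$, so $[-k,k]^\ast$ is NOT a transversal — so this case needs $(-1)^{(p-1)/k}=-1$, equivalently $(p-1)/k$ odd, which is where $p\equiv 1\pmod{2k}$ together with the $k$-radius distinctness of the $2k$ elements $\pm1,\dots,\pm k$ must be invoked. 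Here the real content is that for a $k$-radius prime the $2k$ values $(\pm j)^{(p-1)/k}$ are distinct (this should follow from (a),(b) and a short argument, or be read off from Blackburn–McKee's setup), hence $[-k,k]^\ast$ maps bijectively onto a size-$2k$ subgroup $H'$ of $\mathbb{Z}_p^\ast$, namely the $(p-1)/(2k)$-th powers, and the same transversal argument yields $\mathbb{Z}_p\setminus\{0\} = [-k,k]^\ast\cdot\ker\psi$ where $\psi(x)=x^{(p-1)/(2k)}$. \textbf{The main obstacle} is precisely this sign/parity bookkeeping for the $[-k,k]^\ast$ case: one must be certain that condition (a) $p\equiv 1\pmod{2k}$ is exactly strong enough to force $-1\notin H$ (so that the $2k$ shifted powers are genuinely distinct and form a group of order $2k$) — everything else is the routine "transversal $=$ direct factor" observation.
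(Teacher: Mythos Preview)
Your argument for $[1,k]$ is correct and matches the paper's: the homomorphism $\varphi(x)=x^{(p-1)/k}$ has image of order $k$, condition (b) makes $\varphi$ injective on $[1,k]$, so $[1,k]$ is a transversal for $\ker\varphi$ and $[1,k]\cdot\ker\varphi=\mathbb{Z}_p^\ast$.

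The $[-k,k]^\ast$ part, however, has the sign bookkeeping backwards and does not close. Condition (a) says $2k\mid p-1$, so $(p-1)/k$ is \emph{even}; hence $(-1)^{(p-1)/k}=1$ always. You therefore always land in the case you tried to exclude, and the $2k$ values $(\pm j)^{(p-1)/k}$ are \emph{never} pairwise distinct --- they coincide in pairs. Switching to $\psi(x)=x^{(p-1)/(2k)}$ does not rescue this: condition (a) does not force $(p-1)/(2k)$ to be odd (any $p\equiv 1\pmod{4k}$ gives a counterexample), and then $\psi(-j)=\psi(j)$ as well. So the hoped-for ``$-1\notin H$'' is simply false under (a), and no short argument from (a) and (b) will manufacture it.

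The paper exploits the collapse instead of fighting it. Since $(-1)^{(p-1)/k}=1$, we have $-1\in\ker\varphi$; thus $\{-1,1\}$ is a subgroup of the cyclic group $\ker\varphi$, and a coset decomposition gives $\ker\varphi=\{-1,1\}\cdot B$ for some $B$. Substituting into the factorization you already established yields
\[
\mathbb{Z}_p^\ast \;=\; [1,k]\cdot\ker\varphi \;=\; [1,k]\cdot\{-1,1\}\cdot B \;=\; [-k,k]^\ast\cdot B,
\]
which is the desired splitting. In short, for $[-k,k]^\ast$ you do not need a new power map at all; you refine the splitter set from the $[1,k]$ case by peeling off the factor $\{-1,1\}$ sitting inside $\ker\varphi$.
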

\begin{proof}Let  $\varphi:\mathbb{Z}_p^*\to\mathbb{Z}_p^*, \varphi(a)\equiv a^{(p-1)/k}$ be the power residue map. Then we have $\varphi(ab)=\varphi(a)\varphi(b), a, b\in\mathbb{Z}_p^*$. It is easy to check that $[1, k]ker(\varphi)=\mathbb{Z}_p^*$ since $p$ is a $k$-radius prime. Note that $\{-1,1\}$ is a subgroup of $ker(\varphi)$, so $ker(\varphi)=\{-1, 1\}B$ is a factorization for some $B\in\mathbb{Z}_p^*$, which implies $[-k, k]^*B=\mathbb{Z}_p^*$ is also a factorization. This completes the proof.\end{proof}

\section{Existence and nonexistence of split   $[-k_1, k_2]^*$ sets}

The cases $M=[1, k]$ and $[-k, k]^*$ arise in the case of tiling Euclidean space by certain star bodies (see \cite{St67}). Motivated by an application to error-correcting codes for non-volatile memories, Schwarz suggested in \cite{Sc14} to consider the cases $[-k_1, k_2]^*, 1\le k_1\le k_2$. In  \cite{H73}, Hamarker proved that $M=\{1, 3, 27\}$ splits no finite abelian group.

In this section, we consider the case when $M=[-k_1, k_2]^*$, where $k_1\le k_2$ are non-negative integers.

 By Proposition \ref{mainprop} and Theorem \ref{main}, we have.

\begin{thm}\label{th31}Let $k_1$ and $k_2$ be non-negative integers with $k_1\le k_2$ and $k_1+k_2=k\ge3$. Then the following are equivalent:

(a) $M=[-k_1, k_2]^*$ splits $\mathbb{Z}_p$ for some prime $p$ with $p\equiv1\pmod{(k_1+k_2)}$.

(b)There are infinitely
many primes $p$ such  that $[-k_1, k_2]^*$ splits $\mathbb{Z}_p$.

(c) There is  a direct logarithm from $[-k_1, k_2]^*$ to $\mathbb{Z}_k$ for some positive integer $k$ with $(k_1+k_2)|k$.
\end{thm}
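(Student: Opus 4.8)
The plan is to prove the cyclic chain of implications $(b)\Rightarrow(a)\Rightarrow(c)\Rightarrow(b)$, the last implication being the one that invokes the Kummer--Mills machinery through Proposition \ref{mainprop} and Theorem \ref{main}.

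First I record an elementary observation used in two of the steps: as a set of integers $M=[-k_1,k_2]^\ast$ has exactly $k_1+k_2$ elements (we always have $0\in[-k_1,k_2]$ since $k_1,k_2\ge0$), and if $M$ splits $\mathbb{Z}_p$ with splitter set $S$ then the $k_1+k_2$ integers of $M$ must reduce to pairwise distinct nonzero residues modulo $p$. Indeed $S\neq\emptyset$ (as $MS=\mathbb{Z}_p\setminus\{0\}\neq\emptyset$) and $0\notin S$ (else $m\cdot0=0$ would represent $0$); so if two elements of $M$ agreed modulo $p$, or one of them vanished modulo $p$, then for any $s\in S$ the element $ms$ would have two representations, or $0$ would acquire one. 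Counting nonzero residues gives $|M|\,|S|=p-1$, hence $(k_1+k_2)\mid p-1$. This already yields $(b)\Rightarrow(a)$: any one of the infinitely many primes provided by $(b)$ automatically satisfies $p\equiv1\pmod{k_1+k_2}$.

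For $(a)\Rightarrow(c)$ I fix a primitive root $g$ modulo $p$ and use the group isomorphism $\mathrm{ind}_g:\mathbb{Z}_p^\ast\to(\mathbb{Z}_{p-1},+)$. Writing $\mathbb{Z}_p\setminus\{0\}=MS$ and letting $\overline M\subseteq\mathbb{Z}_p^\ast$ be the image of $M$, we get a factorization $\mathbb{Z}_p^\ast=\overline M\,S$ of the multiplicative group, and applying $\mathrm{ind}_g$ turns it into a factorization $\mathbb{Z}_{p-1}=\mathrm{ind}_g(\overline M)+\mathrm{ind}_g(S)$ of the additive group. Hence the map $f\colon M\to\mathbb{Z}_{p-1}$, $f(m)=\mathrm{ind}_g(m\bmod p)$, is injective (reduction is injective on $M$ by the previous paragraph, and $\mathrm{ind}_g$ is injective), its image is a direct factor of $\mathbb{Z}_{p-1}$, and $f(xy)=f(x)+f(y)$ whenever $x,y,xy\in M$ because $\mathrm{ind}_g$ is a homomorphism and every element of $M$ is a nonzero residue. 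Thus $f$ is a direct logarithm from $[-k_1,k_2]^\ast$ to $\mathbb{Z}_{p-1}$, and $(k_1+k_2)\mid p-1$ by $(a)$; this is exactly $(c)$ with modulus $p-1$. For $(c)\Rightarrow(b)$, let $f\colon[-k_1,k_2]^\ast\to\mathbb{Z}_n$ be a direct logarithm with $(k_1+k_2)\mid n$; since $|M|=k_1+k_2\mid n$, Proposition \ref{mainprop} gives the direct KM logarithm $g(m)\equiv8f(m)\pmod{8n}$ into $\mathbb{Z}_{8n}$, still with $|M|\mid 8n$, and Theorem \ref{main} then produces infinitely many primes $p$ for which $M$ splits $\mathbb{Z}_p$.

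All three implications are short once the dictionary ``splitting of $\mathbb{Z}_p$ $\longleftrightarrow$ factorization of $\mathbb{Z}_{p-1}$ by the index image of $M$'' is in place; the only step requiring a little care is $(a)\Rightarrow(c)$, where one must check that the index image of a splitter set is genuinely a (direct) factor and that $M$ reduces injectively modulo $p$ — the latter being forced, as noted, by uniqueness of representations. I do not anticipate any genuinely hard point, since all the analytic difficulty, namely the actual existence of the required primes, has been absorbed into the Kummer--Mills theorem underlying Theorem \ref{main}.
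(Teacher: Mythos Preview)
Your proof is correct and follows exactly the route the paper intends: the paper does not give a separate proof of this theorem but simply records it as a consequence of Proposition~\ref{mainprop} and Theorem~\ref{main}, and your chain $(b)\Rightarrow(a)\Rightarrow(c)\Rightarrow(b)$ is precisely the unpacking of that remark (with $(a)\Rightarrow(c)$ being the same index-map argument that the paper already uses in the proof of the theorem immediately preceding Section~3). Your extra care in checking that $M$ reduces injectively modulo $p$ and that $(k_1+k_2)\mid p-1$ is automatic is appropriate and matches what the paper takes for granted.
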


As immediate consequences of Theorem \ref{th31}, we have

\begin{coro}Let $k_1$ and $k_2$ be non-negative integers with $k_1\le k_2$ and $k_1+k_2=k\ge3$. Then

(i) If $k_1+k_2+1$ is a prime, then there are infinitely
many primes $p$ such  that $[-k_1, k_2]^*$ splits $\mathbb{Z}_p$.

(ii) If $2k+1$ is a prime, then there are infinitely
many primes $p$ such  that $[1, k]$ splits $\mathbb{Z}_p$.\end{coro}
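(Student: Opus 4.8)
The plan is, in each case, to exhibit one prime $q$ together with an explicit splitter set witnessing that $M$ splits $\mathbb{Z}_q$, and then to invoke Theorem \ref{th31} to pass from a single such prime to infinitely many.

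For (i) I would take $q=k_1+k_2+1$, which is prime by hypothesis. The $k_1+k_2=q-1$ integers forming $[-k_1,k_2]^*$ are all nonzero modulo $q$ (each lies in $[-k_1,k_2]$, and $q\nmid a$ for $0<|a|\le k_1+k_2<q$), and they are pairwise incongruent modulo $q$, since $a\equiv b\pmod q$ would force $q\mid a-b$ with $|a-b|\le k_1+k_2=q-1$. Hence the reduction map sends $[-k_1,k_2]^*$ bijectively onto $\mathbb{Z}_q^*=\{1,\dots,q-1\}$, so $[-k_1,k_2]^*$ splits $\mathbb{Z}_q$ with splitter set $\{1\}$. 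As $q=k_1+k_2+1\equiv1\pmod{k_1+k_2}$ and $k_1+k_2\ge3$, statement (a) of Theorem \ref{th31} holds, and therefore statement (b) produces infinitely many primes $p$ for which $[-k_1,k_2]^*$ splits $\mathbb{Z}_p$.

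For (ii) I would view $[1,k]$ as the set $[-k_1,k_2]^*$ with $k_1=0$ and $k_2=k$, and take $q=2k+1$, prime by hypothesis, with splitter set $S=\{1,-1\}$. Modulo $q$ one has $[1,k]\cdot S=\{1,\dots,k\}\cup\{q-k,\dots,q-1\}$; the two blocks are disjoint (because $k<q-k$, i.e.\ $2k<q$) and their union is exactly $\{1,\dots,q-1\}=\mathbb{Z}_q^*$, while $|[1,k]|\,|S|=2k=q-1$. So every nonzero element of $\mathbb{Z}_q$ has a unique representation $ms$ with $m\in[1,k]$, $s\in S$, i.e.\ $[1,k]$ splits $\mathbb{Z}_q$. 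Since $2k+1\equiv1\pmod k$ and $k\ge3$, condition (a) of Theorem \ref{th31} (applied with $k_1=0$, $k_2=k$) is satisfied, and (b) again yields infinitely many primes.

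There is no serious obstacle here: the statement is genuinely "immediate" once Theorem \ref{th31} is available, and that theorem has already been established. The only points requiring a line of verification are the two elementary facts that the displayed unions of intervals modulo $q$ are disjoint and exhaust $\mathbb{Z}_q^*$ (a counting argument using $k_1+k_2<q$, respectively $2k<q$), and the numerical congruences $k_1+k_2+1\equiv1\pmod{k_1+k_2}$ and $2k+1\equiv1\pmod k$, both of which are trivial. If one preferred not to route part (ii) through the $k_1=0$ instance of Theorem \ref{th31}, the splitting of $\mathbb{Z}_q$ just constructed can instead be fed directly into the theorem of Section 2 asserting that a single splitting prime forces infinitely many.
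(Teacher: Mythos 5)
Your proposal is correct and is essentially the argument the paper leaves implicit: exhibit the single witness prime $q=k_1+k_2+1$ (resp.\ $q=2k+1$), where $[-k_1,k_2]^*$ reduces bijectively onto $\mathbb{Z}_q^*$ (resp.\ $[1,k]\cdot\{\pm1\}$ tiles $\mathbb{Z}_q^*$), so condition (a) of Theorem \ref{th31} holds and the equivalence with (b) gives infinitely many primes. The paper states the corollary as an immediate consequence of Theorem \ref{th31} with no written verification, and your check of the two elementary counting facts supplies exactly the missing details; routing instead through the ``one splitting prime implies infinitely many'' theorem of Section 2, as you note, would be equally valid.
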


For the  nonexistence of certain nonsingular splittings, we also need the following result.

\begin{prop}\label{kfac}{\rm (\cite{SzS09}Theorem 7.12)}If $ G = A \cdot B$ is a factorization of the finite abelian group $G$ (written multiplicatively) and $k$ is an integer relatively prime to $|A|$, then $ G = A^k\cdot B$ is a factorization of the abelian group $G$, where $A^k=\{a^k : a \in A\}$.\end{prop}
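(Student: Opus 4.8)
The plan is to reduce first to the case where $k$ is a prime, and then to settle the prime case by Fourier analysis on $G$, the only genuinely delicate point being characters whose order is divisible by $k$. For the reduction, I would factor $k=p_{1}p_{2}\cdots p_{r}$ into primes, each coprime to $|A|$; granting the statement for prime exponents, $G=A\cdot B$ yields that $G=A^{p_{1}}\cdot B$ is a factorization, whence $|A^{p_{1}}|\,|B|=|G|=|A|\,|B|$ forces $|A^{p_{1}}|=|A|$, so $\gcd(p_{2},|A^{p_{1}}|)=1$; iterating, $G=A^{p_{1}\cdots p_{r}}\cdot B=A^{k}\cdot B$ is a factorization.

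For the prime case, fix a prime $p$ with $p\nmid|A|$, and for $X\subseteq G$ and a character $\chi$ of $G$ write $\widehat X(\chi)=\sum_{x\in X}\chi(x)$ and $[X]=\sum_{x\in X}x\in\mathbb{Z}[G]$. The hypothesis says exactly that $\widehat A(\chi)\widehat B(\chi)=|G|$ for $\chi=1$ and $=0$ otherwise, while the conclusion $G=A^{p}\cdot B$ is equivalent, by passing to characters (which also returns the injectivity of $a\mapsto a^{p}$ on $A$), to
\[
\widehat A(\chi^{p})\,\widehat B(\chi)=|G|\cdot[\chi=1]\qquad\text{for every character }\chi .
\]
When $p\nmid\operatorname{ord}(\chi)$, the map $\zeta\mapsto\zeta^{p}$ is a $\mathbb{Q}$-automorphism $\sigma$ of $\mathbb{Q}(\zeta_{\operatorname{ord}(\chi)})$ carrying $\widehat A(\chi)$ to $\widehat A(\chi^{p})$; since $\sigma$ preserves vanishing, $\widehat A(\chi)=0$ forces $\widehat A(\chi^{p})=0$, whereas $\widehat A(\chi)\neq 0$ forces $\widehat B(\chi)=0$, and in either case the displayed identity holds. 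In particular the statement is already proved whenever $p\nmid|G|$.

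The crux is the case $p\mid\operatorname{ord}(\chi)$ together with $\widehat B(\chi)\neq 0$ (hence $\widehat A(\chi)=0$), where one must show $\widehat A(\chi^{p})=0$. I would let $\lambda$ be the $p$-component of $\chi$, of order $p^{e}$, put $K=\ker(\lambda^{p^{e-1}})$ so that $G/K\cong\mathbb{Z}_{p}$, and push $[A][B]=[G]$ forward to $\mathbb{Z}[G/K]$; evaluating at a primitive $p$-th root of unity, the non-negativity of the coordinates forces one of the images of $A$, $B$ to be constant over the $p$ cosets of $K$, and the ``$A$-constant'' alternative is impossible because it would give $p\mid|A|$, so $B$ meets every coset of $K$ in the same number of elements. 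Writing each element of $A$ and of $B$ as a coset representative times an element of $K$ converts $[A][B]=[G]$ into a system of equations in $\mathbb{Z}[K]$; combining this with the equidistribution of $B$ just obtained and with the identity already known for characters of smaller $p$-adic valuation of the order — an induction on $v_{p}(\operatorname{ord}(\chi))$ whose base case is the Galois argument above — one peels off the $\mathbb{Z}_{p}$-factor and deduces $\widehat A(\chi^{p})=0$. This last step is the main obstacle: since $A$ need not be equidistributed over the cosets of $K$, the link between $\widehat A(\chi)$ and $\widehat A(\chi^{p})$ is invisible in the coset decomposition and has to be extracted from the $\mathbb{Z}[K]$-identity together with the inductive hypothesis, and organizing that descent cleanly is the heart of the argument.
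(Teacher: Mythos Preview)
The paper does not give its own proof of this proposition: it is quoted verbatim from Szab\'o--Sands (Theorem~7.12) and used as a black box. So there is no in-paper argument to compare against; what one can compare with is the standard proof found in that reference, which is very different from, and much shorter than, your plan.

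Your reduction to prime $k$ and the Galois step for characters of order prime to $p$ are fine. The gap is exactly where you flag it: the ``crux'' case $p\mid\operatorname{ord}(\chi)$ with $\widehat B(\chi)\neq 0$. From $\widehat A(\chi)=0$ alone there is no Galois map sending $\chi$ to $\chi^{p}$, and your coset/induction sketch does not supply the missing link. Knowing that $B$ is equidistributed over the cosets of the index-$p$ subgroup $K$ tells you $\widehat B(\lambda^{p^{e-1}})=0$, which is information about a \emph{different} character, not about $\widehat A(\chi^{p})$; and the proposed induction on $v_{p}(\operatorname{ord}(\chi))$ has no clear inductive statement that simultaneously controls $\widehat A(\chi^{p})$ and feeds back into the next step. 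As written, the descent is a hope rather than an argument.

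The textbook proof bypasses characters entirely. Work in the group ring. In $\mathbb{F}_{p}[G]$ the Frobenius identity $(\sum_{a\in A}a)^{p}=\sum_{a\in A}a^{p}$ gives
\[
\Bigl(\sum_{a\in A}a^{p}\Bigr)\Bigl(\sum_{b\in B}b\Bigr)=[A]^{p}[B]=[A]^{p-1}[G]=|A|^{p-1}[G]\equiv [G]\pmod{p},
\]
using $[A][G]=|A|\,[G]$ and Fermat's little theorem (valid since $p\nmid|A|$). Back in $\mathbb{Z}[G]$, the coefficient $c_{g}$ of each $g$ in $\bigl(\sum_{a}a^{p}\bigr)\bigl(\sum_{b}b\bigr)$ is a nonnegative integer with $c_{g}\equiv 1\pmod p$ and $\sum_{g}c_{g}=|A|\,|B|=|G|$; hence every $c_{g}=1$. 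This says precisely that $G=A^{p}\cdot B$ is a factorization (and incidentally that $a\mapsto a^{p}$ is injective on $A$). Your prime-by-prime reduction then finishes the general $k$. This Frobenius/R\'edei argument is the one you should aim for; the character route, even if it can be made to work, is substantially heavier and your current outline does not close it.
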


 We first prove the following more general theorem.

\begin{thm}\label{th32} Let $n=2m$ be an even positive integer. Suppose that $N$ is a subset of the cyclic group $\mathbb{Z}_{2m}$ such that
$\{0, m\}\subseteq N$ and $|N|$ is odd, then $N$ is not a direct factor of $\mathbb{Z}_{2m}$. \end{thm}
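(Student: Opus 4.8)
The plan is to argue by contradiction using a counting/parity argument on the group ring or, more elementarily, on a suitable homomorphism. Suppose $\mathbb{Z}_{2m} = N \cdot B$ is a factorization (I write the group multiplicatively here to match the paper, but it is the additive group $\mathbb{Z}_{2m}$, so $N+B = \mathbb{Z}_{2m}$ with unique representation). The key structural fact I would exploit is that $\{0,m\}$ is the unique subgroup of order $2$ in $\mathbb{Z}_{2m}$, and it sits inside $N$.

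First I would pass to the quotient $\mathbb{Z}_{2m}/\{0,m\} \cong \mathbb{Z}_m$. Consider the natural projection $\pi:\mathbb{Z}_{2m}\to\mathbb{Z}_m$. Since $N+B=\mathbb{Z}_{2m}$ is a factorization and $\{0,m\}\subseteq N$, every element of $\mathbb{Z}_{2m}$ is written uniquely as $n+b$; I want to understand the image. The cleanest route: use the substitution homomorphism / character argument. Evaluate the characteristic-function identity $\widehat{N}(\chi)\widehat{B}(\chi) = \widehat{\mathbb{Z}_{2m}}(\chi)$, where $\widehat{A}(\chi)=\sum_{a\in A}\chi(a)$, for characters $\chi$ of $\mathbb{Z}_{2m}$. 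For the nontrivial character $\chi$ of order $2$ (the one with $\chi(1)=-1$, so $\chi(m)=(-1)^m$... wait, $\chi(m) = \chi(1)^m = (-1)^m$, but $m$ here is just an exponent of the generator image, and since $m$ is the element of order $2$, $\chi(m)=-1$ for the order-$2$ character): we get $\widehat{N}(\chi)\widehat{B}(\chi)=0$. The point of $\{0,m\}\subseteq N$ is that the order-$2$ character $\chi$ satisfies $\chi(0)+\chi(m) = 1 + (-1) = 0$, so those two terms cancel in $\widehat{N}(\chi)$; but I need a contradiction from the parity hypothesis on $|N|$, so I should instead look mod $2$ at an appropriate character of order $2^a$ or use a $2$-adic valuation argument.

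The cleaner approach, which I expect to work: reduce the factorization modulo $2$ in the group ring $\mathbb{Z}[\mathbb{Z}_{2m}]$. Write $N(x), B(x)$ for the generating polynomials in $\mathbb{Z}[x]/(x^{2m}-1)$. Then $N(x)B(x) \equiv 1 + x + \cdots + x^{2m-1} \pmod{x^{2m}-1}$. Reduce all coefficients mod $2$: we get $N(x)B(x) \equiv (1+x)^{?}\cdots$; more precisely $1+x+\cdots+x^{2m-1} = (1+x)(1+x^2+x^4+\cdots)$ type factorization, and over $\mathbb{F}_2$ one has $x^{2m}-1 = (x^m-1)^2$ when... no, only when $m$ is a power of $2$. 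So I need $2$-adic care. The robust statement: evaluate at $x=-1$, i.e., apply the order-$2$ character. But to get the parity of $|N|$ into play, I would instead evaluate $N(1)=|N|$ (odd by hypothesis) and $B(1)=|B|$, with $|N|\cdot|B| = 2m$, forcing $|B|$ even, say $|B|=2^c u$ with $u$ odd and $c\ge 1$, and $|N| = 2m/|B|$ is odd so $2m$ has $2$-adic valuation exactly $c$. Now the main obstacle: leveraging $\{0,m\}\subseteq N$ together with "$|N|$ odd" to contradict the existence of $B$. I would do this by pairing: since $\{0,m\}\subseteq N$, for any $b\in B$ the two elements $b$ and $b+m$ are among the products; consider the involution $t\mapsto t+m$ on $\mathbb{Z}_{2m}$. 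It has no fixed points, so it pairs up the $2m$ elements into $m$ pairs $\{t,t+m\}$. In the factorization, $t = n+b$ uniquely; then $t+m = (n+m)+b$. If $n+m\in N$ this is the unique representation of $t+m$. So the involution $t\mapsto t+m$ corresponds, under the unique representation, to the map $n\mapsto n+m$ on $N$ (with $b$ fixed) — but ONLY when $n+m\in N$. Define $N_0 = \{n\in N: n+m\in N\}$; this is exactly the union of cosets of $\{0,m\}$ inside $N$, so $|N_0|$ is even, hence $|N\setminus N_0|$ is odd (since $|N|$ odd) — in particular $N\setminus N_0\ne\emptyset$.

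So the hard part, and where I'd spend the effort, is deriving a contradiction from $N\setminus N_0\ne\emptyset$. Take $n^*\in N$ with $n^*+m\notin N$. Then for every $b\in B$, the element $n^*+m+b = n^* + (b+m)$; and $b+m$ may or may not be in $B$. I would count: the elements of the form $(n^*+m) + b'$ as $b'$ ranges over... hmm, actually the right counting object: for each $b\in B$, $n^*+b$ has unique representation $n^*+b$, and $n^* + m + b$ has a unique representation $n' + b'$ with $n'\ne n^*+m$ (since $n^*+m\notin N$), where $n'\in N$, $b'\in B$. This sets up an injection $B\hookrightarrow (N\setminus\{n^*+m\})\times B$ landing in a constrained set, and combined with running over ALL $n^*\in N\setminus N_0$ and the parity of $|N|$, a double count of the pairs $\{t,t+m\}$ versus $\{(n,b),(n+m,b)\}$ should force $|N_0|$ to have the same parity as $|N|$, the desired contradiction. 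Concretely: the involution $\sigma(t)=t+m$ acts on $\mathbb{Z}_{2m} \cong N\times B$ (via unique representation), and I claim $\sigma$ fixes the $B$-coordinate. Indeed if $t=n+b$ then $\sigma(t)=t+m$; writing $\sigma(t)=n'+b'$, reduce mod the subgroup $\langle b'-b\rangle$... I would instead argue: the $m$ orbits of $\sigma$ are each a pair, partition into those where the $N$-coordinate flips by $m$ (giving $N_0/\{0,m\}$ worth, so an even contribution to $|N|$ counted with multiplicity $|B|$) — this is getting intricate, so let me just say: I expect the clean finish is to show that $B$ itself must be a union of $\{0,m\}$-cosets OR $N$ must be, by analyzing how $\sigma$ permutes the factorization, and since $|N|$ is odd $N$ cannot be a union of such cosets, so $B$ is, whence $|B|$-translation structure forces a contradiction with unique representation at $m$ itself ($m = 0+m$ with $0\in N$; but also if $B$ is $\{0,m\}$-invariant then $m = n + (m-n)$... ). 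The genuinely hard step is this combinatorial bookkeeping tying $\sigma$-invariance to the parity obstruction; everything before it is routine.

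\medskip

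\noindent\textbf{Summary of the plan.} (1) Assume $\mathbb{Z}_{2m}=N+B$ is a factorization. (2) Note $\{0,m\}$ is the unique order-$2$ subgroup and $|B|$ is even (since $|N|\cdot|B|=2m$, $|N|$ odd). (3) Consider the fixed-point-free involution $\sigma: t\mapsto t+m$ and its induced action on the indexing set $N\times B$ via unique representation. (4) Show $\sigma$ cannot flip the $N$-coordinate on all of $N$ (as that would make $|N|$ even), so there is $n^*\in N$ with $n^*+m\notin N$. (5) Run a double-counting argument on the $m$ orbits of $\sigma$, splitting by whether the $N$- or $B$-coordinate is "responsible" for the shift by $m$, to force $|N|$ even — contradiction. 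The main obstacle is step (5): correctly setting up the orbit bookkeeping so the parity of $|N|$ is pinned down. If that proves stubborn, the fallback is the $\mathbb{F}_2$-group-ring argument: reduce $N(x)B(x)\equiv \frac{x^{2m}-1}{x-1}\pmod 2$ in $\mathbb{F}_2[x]/(x^{2m}-1)$, use that $(x-1)\mid N(x)$ over $\mathbb{F}_2$ is forced in a way incompatible with $|N|=N(1)$ odd, exploiting $(1+x^m)\mid N(x)$ coming from $\{0,m\}\subseteq N$.
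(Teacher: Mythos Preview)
Your proposal has a genuine gap. Step~(5) is the crux and you never carry it out; moreover, the dichotomy you rely on (``either the $N$-coordinate or the $B$-coordinate absorbs the shift by $m$'') is not valid. From $\{0,m\}\subseteq N$ one does get $B\cap(B+m)=\emptyset$ (since $0+(b+m)=m+b$ would violate uniqueness), so the $B$-coordinate is \emph{never} solely responsible for the shift. But for $n\in N$ with $n+m\notin N$, the image $\sigma(n,b)=(n',b')$ generically has \emph{both} coordinates different from $(n,b)$, and your orbit bookkeeping gives no handle on this case. The set $N_1=\{n\in N:n+m\notin N\}$ does have odd cardinality, and $\sigma$ restricts to a fixed-point-free involution on $N_1\times B$, but $|N_1|\cdot|B|$ is even, so no parity contradiction emerges. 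Your $\mathbb{F}_2$ fallback also contains an error: $\{0,m\}\subseteq N$ only says that the coefficients of $x^0$ and $x^m$ in $N(x)$ are $1$; it does \emph{not} imply $(1+x^m)\mid N(x)$.

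The paper's argument is entirely different and essentially three lines. It invokes the replacement theorem of Szab\'o--Sands (stated just before this theorem as Proposition~\ref{kfac}): if $G=A+B$ is a factorization and $\gcd(k,|A|)=1$, then $G=kA+B$ is also a factorization. Apply this with $k=2$, which is coprime to the odd number $|N|$: from $\mathbb{Z}_{2m}=N+A$ one gets $\mathbb{Z}_{2m}=2N+A$, forcing $|2N|=|N|$. But $2\cdot 0=2\cdot m=0$ in $\mathbb{Z}_{2m}$, so $|2N|\le |N|-1$, a contradiction. The single external ingredient (Proposition~\ref{kfac}) encapsulates exactly the ``multiply by a coprime integer'' trick that your involution/character attempts are groping toward; once you have it, the hypotheses $\{0,m\}\subseteq N$ and $|N|$ odd combine in one stroke.
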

\begin{proof} If $N$ is a direct factor of $\mathbb{Z}_{2m}$, then there exists a subset $A$ of $\mathbb{Z}_{2m}$ such that $N+A=\mathbb{Z}_{2m}$ is a factorization. Since $|N|$ is odd, by Proposition \ref{kfac}, $2N+A $ is also a factorization of $\mathbb{Z}_{2m}$, which implies that $|2N|=|N|$. However, $2\cdot0=2\cdot m=0$ in $\mathbb{Z}_{2m}$, it follows that $|2N|\le|N|-1$, a contradiction. \end{proof}

\begin{thm}Let $M$ be a finite set of nonzero integers. Suppose that $\{-1, 1\}\subset M$ and $|M|$ is odd, then $M$ does not split $\mathbb{Z}_p$ for any prime $p$. In particular, if $1\le k_1\le k_2$ and $k_1+k_2$ is odd, then $[-k_1, k_2]^*$ does not split $\mathbb{Z}_p$ for any prime $p$.\end{thm}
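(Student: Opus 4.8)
The plan is to derive the statement as a direct consequence of Theorem~\ref{th32} together with the correspondence between splittings of $\mathbb{Z}_p$ and direct logarithms that underlies Theorem~\ref{main} and Theorem~\ref{th31}. First I would observe that if $M$ splits $\mathbb{Z}_p$ for some prime $p$, then writing $\varphi$ for the identity (or more generally the relevant power-residue map as in the proof of the theorem following Theorem~\ref{main}), $M$ being a splitter set forces $p-1=|M|\cdot|B|$ for the associated splitter set $B$, and it yields a direct logarithm $f:M\to\mathbb{Z}_k$ with $|M|\mid k$; concretely, in the simplest reduction one takes $k=p-1$ and $f(m)=\mathrm{ind}_g(m)$ for a fixed primitive root $g$, so that $f(M)$ is a direct factor of $\mathbb{Z}_{p-1}$. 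The point is that any splitting of $\mathbb{Z}_p$ produces a direct factor of a cyclic group that is an additive-logarithm image of $M$.

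Next I would analyze the hypothesis $\{-1,1\}\subset M$. Under the logarithm $f$ we have $f(1)=0$ and $f(-1)=f((-1)^2)/\,$-type reasoning: since $(-1)\cdot(-1)=1\in M$, additivity gives $2f(-1)=f(1)=0$ in $\mathbb{Z}_k$, so $f(-1)$ is an element of order dividing $2$. Moreover $f(-1)\neq 0$ because $f$ is injective and $f(1)=0$, so $f(-1)$ is the unique element of order $2$ in $\mathbb{Z}_k$, forcing $k$ even, say $k=2m$, and $f(-1)=m$. Thus $\{0,m\}\subseteq f(M)$. Since $f$ is injective, $|f(M)|=|M|$, which is odd by hypothesis. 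Now Theorem~\ref{th32} applies verbatim with $N=f(M)$: a subset of $\mathbb{Z}_{2m}$ containing $\{0,m\}$ and of odd cardinality cannot be a direct factor of $\mathbb{Z}_{2m}$. This contradicts the fact, established in the previous step, that $f(M)$ is a direct factor of $\mathbb{Z}_k$. Hence no such prime $p$ exists.

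For the ``in particular'' clause, I would simply note that when $1\le k_1\le k_2$ the set $[-k_1,k_2]^*$ contains both $-1$ and $1$, and $|[-k_1,k_2]^*|=k_1+k_2$ (no zero is removed since $k_1\ge 1$ means $0\notin$ the range... actually $0$ is in $[-k_1,k_2]$ and is removed, so $|[-k_1,k_2]^*|=k_1+k_2$), which is assumed odd; thus the general statement applies and $[-k_1,k_2]^*$ splits no $\mathbb{Z}_p$.

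The main obstacle, and the step requiring the most care, is the first one: cleanly extracting from ``$M$ splits $\mathbb{Z}_p$'' a genuine direct logarithm $f:M\to\mathbb{Z}_k$ with $f(M)$ a direct factor. One must verify that the splitter set $B$ of the splitting, transported through the index map, really gives a factorization $f(M)+f^{-1}\text{-preimage} = \mathbb{Z}_{p-1}$ in the additive group — i.e. that uniqueness of the representation $g=ms$ in $\mathbb{Z}_p$ translates to a direct sum decomposition of $\mathbb{Z}_{p-1}$. This is essentially the content of the argument in the proof of Theorem~\ref{main} run in reverse, but one should state it carefully, in particular checking that $|M| \mid p-1$ and that the image $f(M)$ genuinely has a complement. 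Once that bridge is in place, the rest is an immediate application of Theorem~\ref{th32}.
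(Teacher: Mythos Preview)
Your proposal is correct and follows essentially the same approach as the paper: both take the index map $m\mapsto \mathrm{ind}_g(m)$ with respect to a primitive root $g$ of $p$, observe that this carries the splitting $MB=\mathbb{Z}_p^*$ to an additive factorization $N+S=\mathbb{Z}_{p-1}$ with $\{0,(p-1)/2\}\subseteq N$ and $|N|$ odd, and then invoke Theorem~\ref{th32} for the contradiction. The ``bridge'' you worry about is immediate from the group isomorphism $\mathbb{Z}_p^*\cong\mathbb{Z}_{p-1}$ given by the index, so the paper simply states it without further justification.
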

\begin{proof}Assume that $M$  splits $\mathbb{Z}_p$ with the splitter set $B$ for some prime $p$. Let $g$ be a primitive root of the prime $p$, $N=\{ind_g (m), m\in M\}$, $S=\{ind_g(m), m\in B\}$, then we have $\{0, (p-1)/2\}\subset N$ and $N+S=\mathbb{Z}_{p-1}$ is a factorization, which contradicts Theorem \ref{th32}. This completes the proof.\end{proof}

\section{Some presentations for the splitter set $B$}

In this section, we will give some presentations for the possible splitter set. We first present the following useful results.

\begin{lem}\label{kongji}Let $M$ be a finite subset of nonzero integers and $1\in M$, and let $X=M/M\backslash\{1\}=\{\frac{m_1}{m_2}: m_1\ne m_2, \, m_1,m_2\in M\}$. Let $p$ be an odd prime with $p\equiv1\pmod{|M|}$. Then we have

(a) If $M$ splits $\mathbb{Z}_p$ with the splitter set $B$, then $B\cap BX=\emptyset$.

(b) If $B$ is a subset of $\mathbb{Z}_p$ such that $1\in B$, $MB=\mathbb{Z}_p\backslash\{0\}$ and $B\cap BX=\emptyset$, then $M$ splits $\mathbb{Z}_p$ with the splitter set $B$.\end{lem}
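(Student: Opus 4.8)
The plan is to exploit the standard translation between splittings of $\mathbb{Z}_p$ and factorizations of the multiplicative group $\mathbb{Z}_p^\ast$, together with the fact that $p\equiv1\pmod{|M|}$ forces $|M|$ to divide $p-1=|\mathbb{Z}_p^\ast|$, so that a splitting $MB=\mathbb{Z}_p\setminus\{0\}$ is exactly a factorization of the cyclic group $\mathbb{Z}_p^\ast$ with factor sets (the images of) $M$ and $B$. I will also use the observation that $M$ splits $\mathbb{Z}_p$ nonsingularly here since $p$ is prime and the elements of $M$ are nonzero residues, so every $m\in M$ is invertible mod $p$, hence $M/M$ makes sense inside $\mathbb{Z}_p^\ast$ and $X=M/M\setminus\{1\}$ is a well-defined subset of $\mathbb{Z}_p^\ast$.

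For part (a): suppose $M$ splits $\mathbb{Z}_p$ with splitter set $B$, i.e.\ every nonzero $g$ has a \emph{unique} representation $g=mb$ with $m\in M$, $b\in B$. I would argue by contradiction: if $b_1\cap BX\ne\emptyset$, pick $c\in B\cap BX$, so $c=b$ and $c=b'x$ for some $b,b'\in B$ and $x=m_1/m_2\in X$ with $m_1\ne m_2$ in $M$. Then $m_2 c = m_2 b$ and $m_2 c = m_1 b'$, giving two representations $m_2\cdot b = m_1\cdot b'$ of the same nonzero element of $\mathbb{Z}_p$ as a product of an element of $M$ and an element of $B$. Since $m_1\ne m_2$, uniqueness of the splitting is violated (note $m_2 c\ne 0$ because $m_2$ is invertible and $c\in B\subseteq\mathbb{Z}_p^\ast$; I should check $c\ne0$, which holds as $B\subseteq\mathbb{Z}_p\setminus\{0\}$). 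Hence $B\cap BX=\emptyset$.

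For part (b): assume $1\in B$, $MB=\mathbb{Z}_p\setminus\{0\}$, and $B\cap BX=\emptyset$; I must upgrade the covering $MB=\mathbb{Z}_p\setminus\{0\}$ to a \emph{unique-representation} covering. A counting argument is the cleanest route: since $p\equiv1\pmod{|M|}$ and (implicitly, for the lemma to be non-vacuous) the $|M|$ elements of $M$ are pairwise distinct mod $p$, if the representation were \emph{not} unique somewhere then $|M|\cdot|B|>p-1$; conversely if it is unique everywhere then $|M|\cdot|B|=p-1$. So it suffices to show no nonzero element has two representations. Suppose $m_1 b_1 = m_2 b_2$ with $m_i\in M$, $b_i\in B$. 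If $m_1=m_2$ then $b_1=b_2$ (invertibility), done. If $m_1\ne m_2$, then $b_1/b_2 = m_2/m_1 \in X$, so $b_1 = b_2\cdot(m_2/m_1)\in B\cap BX$, contradicting $B\cap BX=\emptyset$. Hence every representation is unique, so $M$ splits $\mathbb{Z}_p$ with splitter set $B$ (the condition that $0$ has no representation is automatic since $M,B\subseteq\mathbb{Z}_p^\ast$ and $\mathbb{Z}_p$ is a field).

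The main obstacle, and the only place requiring care, is part (b): one must be sure that $B\cap BX=\emptyset$ plus surjectivity of $MB$ genuinely forces bijectivity. The subtlety is whether $|MB|=|M|\,|B|$ as a multiset count equals $p-1$; this is where the hypothesis $p\equiv 1\pmod{|M|}$ and the implicit distinctness of the elements of $M$ modulo $p$ enter, and where one should confirm that the map $M\times B\to\mathbb{Z}_p^\ast$ being surjective together with injectivity (which is exactly what $B\cap BX=\emptyset$ delivers, via the argument above) pins down $|M|\,|B|=p-1$. Everything else is a routine manipulation of invertible residues in the field $\mathbb{Z}_p$.
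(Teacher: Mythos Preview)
Your proposal is correct and follows essentially the same route as the paper: for (a) you derive a double representation $m_2b=m_1b'$ from an element of $B\cap BX$, and for (b) you show directly that $m_1b_1=m_2b_2$ with $m_1\ne m_2$ would force $b_1\in B\cap BX$, just as the paper does. The extra remarks about counting and the role of $p\equiv1\pmod{|M|}$ are harmless commentary but not actually needed, since surjectivity plus the injectivity you prove already gives the splitting; also note the small typo ``$b_1\cap BX$'' where you meant ``$B\cap BX$''.
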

\begin{proof}(a) On the contrary, there are elements $b_1, b_2\in B$ and $x\in X$ such that $b_1=b_2x$. By the definition of $X$, we have $x=m_1/m_2, m_1, m_2\in M$ and $m_1\ne m_2$. Hence $b_1m_2=b_2m_1$, which contradicts $MB$ being a splitting of $\mathbb{Z}_p$.

(b) By the assumptions, it suffices to prove that for any $ m_1, m_2\in M, b_1, b_2\in B$ with $m_1b_1=m_2b_2$, we have $b_1=b_2$ and $m_1=m_2$. If $m_1b_1=m_2b_2$, then $b_2=b_1\cdot\frac{m_1}{m_2}\in BX$ when $m_1\ne m_2$, so $m_1=m_2$, and hence $b_1=b_2$. It follows that $M$ splits $\mathbb{Z}_p$ with the splitter set $B$, as required. \end{proof}

\begin{lem}(\cite{[PK]}, Theorem IV.1.) \label{only if subgroup}
Let $k_1$, $k_2$ be positive integers with $1\leq k_1\leq k_2$ and let $p$ be an odd prime with $p\equiv 1$(mod $k_1 + k_2)$. Then $M = [-k_1,k_2]^*$ is a direct factor of $\mathbb{Z}_p^*$ if and only if $M$ is a direct factor of the subgroup $H =<-1,2,\cdots ,k_2 >$ of $\mathbb{Z}_p^*$.
\end{lem}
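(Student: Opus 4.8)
The plan is to deduce the lemma from the following elementary fact, which uses nothing special about $[-k_1,k_2]^*$ beyond the containment $M\subseteq H$ together with the fact that $H$ is a subgroup of $\mathbb{Z}_p^*$: if $G$ is a finite abelian group, $H\le G$, and $M\subseteq H$, then $M$ is a direct factor of $G$ if and only if $M$ is a direct factor of $H$. Accordingly, the first step is to verify that $M\subseteq H$. This is immediate: $1$ is the identity of $H$; the integers $2,\dots,k_2$ are among the generators of $H$; $-1$ is a generator; and for $1\le j\le k_1$ one has $-j=(-1)\cdot j\in H$ since $j\le k_1\le k_2$. The hypotheses $p$ odd, $p\equiv 1\pmod{k_1+k_2}$ and $1\le k_1\le k_2$ are needed only to guarantee that the $k_1+k_2$ integers of $[-k_1,k_2]^*$, lying in an interval of length $k_1+k_2<p$ and avoiding $0$, reduce to $k_1+k_2$ distinct nonzero residues modulo $p$, so that ``$M$ is a direct factor'' is meant in the intended sense.

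For the ``if'' direction, suppose $H=M\cdot C$ is a factorization with $C\subseteq H$, and choose a set $D$ of coset representatives of $H$ in $\mathbb{Z}_p^*$, so that $\mathbb{Z}_p^*=H\cdot D$ is a factorization. I would show $\mathbb{Z}_p^*=M\cdot(CD)$ is a factorization: surjectivity is clear since $\mathbb{Z}_p^*=HD=(MC)D$, and for uniqueness, if $m_1c_1d_1=m_2c_2d_2$ with $m_i\in M,\ c_i\in C,\ d_i\in D$, then reducing modulo $H$ (using $m_i,c_i\in H$) gives $d_1H=d_2H$, hence $d_1=d_2$; cancelling $d_1$ leaves $m_1c_1=m_2c_2$ in $H$, and directness of $H=MC$ forces $m_1=m_2$ and $c_1=c_2$. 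Hence $M$ is a direct factor of $\mathbb{Z}_p^*$.

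For the ``only if'' direction, suppose $\mathbb{Z}_p^*=M\cdot B$ is a factorization, and set $C=B\cap H$. Given $h\in H$, write $h=mb$ with $m\in M,\ b\in B$; then $b=m^{-1}h$, and since $m^{-1}\in H$ (because $m\in M\subseteq H$ and $H$ is a group) and $h\in H$, we get $b\in H$, i.e. $b\in C$. So $H=M\cdot C$, with uniqueness inherited from the factorization of $\mathbb{Z}_p^*$; thus $M$ is a direct factor of $H$. This statement carries no real obstacle: it is coset bookkeeping, and the single point requiring attention is the uniqueness check in the ``if'' direction, where one must quotient by $H$ to separate off the $D$-component before applying directness of $H=MC$.
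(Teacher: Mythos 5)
Your proof is correct. Note that the paper does not actually prove this lemma; it is imported from the reference [PK] (Theorem IV.1 there), so there is no in-paper argument to compare against. Your reduction to the general fact that for $M\subseteq H\le G$ ($G$ finite abelian) $M$ is a direct factor of $G$ iff it is a direct factor of $H$ --- proved by splitting off a transversal $D$ of $H$ in one direction and intersecting the complementary factor with $H$ in the other --- is exactly the standard coset-bookkeeping argument one expects for such a statement, and both directions, including the uniqueness checks, are carried out correctly.
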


 In \cite{[PK]}, we  obtained a necessary and sufficient conditions for the prime $p$ such that $[-1, 3]^*$ splits $\mathbb{Z}_p$ with the some splitter set $B$. Now we give a presentation of $B$ for $[-1, 5]^*$, we have.
\begin{thm} \label{|T_1/T_2|}
For a prime $p$ with $p\equiv1\pmod{6}$, if $[-1, 5]^*$ splits $\mathbb{Z}_p^*$ with a splitter set $B$ and let $B_1=B\cap <-1, 2, 3, 5>$, then
$$B_1=\bigcup_{k=0}^\infty\varepsilon_k8^k<(-\frac{2}{5}), (-\frac{4}{3})>, \quad \varepsilon_k=1 \, \mbox{or} \, -1,$$ or $$B_1=\bigcup_{k=0}^\infty\varepsilon_k8^k<(-\frac{4}{5}), (-\frac{2}{3})>, \quad \varepsilon_k=1 \, \mbox{or} \, -1.$$
\end{thm}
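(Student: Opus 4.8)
The plan is to work entirely inside the group $\langle -1,2,3,5\rangle \le \mathbb{Z}_p^*$ and to exploit the structure already developed for $[-1,3]^*$ together with Lemma~\ref{only if subgroup} and Lemma~\ref{kongji}. Write $M=[-1,5]^*=\{-1,1,2,3,4,5\}$, so $|M|=6$ and $p\equiv 1\pmod 6$ guarantees a power-residue setup. First I would invoke Lemma~\ref{only if subgroup}: since $[-1,5]^*$ splits $\mathbb{Z}_p^*$ (so in particular $M$ is a direct factor of $\mathbb{Z}_p^*$), $M$ is a direct factor of $H=\langle -1,2,3,5\rangle$, and $B_1=B\cap H$ is the corresponding complement, i.e. $MB_1=H$ is a factorization. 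So the whole problem is reduced to classifying the complements $B_1$ of the fixed $6$-element set $M$ inside the finitely generated abelian group $H$.

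Next I would set up coordinates on $H$. The relations among $M$ that matter are the multiplicative ones: $4=2^2$, and the element $-1$ is an involution. The key observation driving the two stated answers is that $6=MB_1$-factorizations of $H$ are governed by how the ``quotient set'' $X=M/M\setminus\{1\}$ sits inside $H$; by Lemma~\ref{kongji}(a) we have $B_1\cap B_1X=\emptyset$, and by (b) any $B_1$ with $1\in B_1$, $MB_1=H$ and $B_1\cap B_1X=\emptyset$ works. I would compute $X$ explicitly: it consists of the ratios $m_1/m_2$, and the ``atoms'' that generate the obstruction are $2/5,\ 4/3,\ 4/5,\ 2/3$ and their negatives and inverses. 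The claim is essentially that, modulo the subgroup one is forced to adjoin, the only consistent choices of a sign pattern along powers of $8$ are the two displayed unions. Here the number $8=2^3$ appears because $2\cdot 4 = 8$ is the product of the two nontrivial ``even'' generators $2,4$ of $M$, so $8$ is exactly the multiplier that carries one $M$-coset to the next in a chain; iterating produces the $\bigcup_{k\ge 0}\varepsilon_k 8^k\langle\cdots\rangle$ shape.

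Concretely, the argument I would run is: fix the convention $1\in B_1$. Partition $H$ into the cosets needed to be covered by $M$. Starting from the coset containing $1$, the requirement that $M\cdot(\text{something in }B_1)$ hit each element exactly once forces, step by step, which of the remaining small ratios lies in $B_1$; at each step there is a genuine binary choice (this is the $\varepsilon_k=\pm1$), and the choice of which pair $\{2/5,4/3\}$ versus $\{4/5,2/3\}$ to ``kill first'' is the single global dichotomy producing the two cases. One shows by induction on $k$ that after handling the part of $H$ of ``$8$-valuation $< k$'' the only freedom left is the sign $\varepsilon_k$ and the generator of the rank-two piece $\langle -2/5,-4/3\rangle$ (resp. $\langle -4/5,-2/3\rangle$), because the factorization condition rigidly propagates: $M$ times the already-determined part leaves exactly one $M$-coset worth of room, which must be $\varepsilon_k 8^k\langle\cdots\rangle$. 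Conversely one checks each displayed $B_1$ does give $MB_1=H$ with $B_1\cap B_1X=\emptyset$, using Lemma~\ref{kongji}(b).

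The main obstacle I anticipate is the rigidity/induction step: showing that once $1\in B_1$ and a few initial ratios are pinned down, the \emph{entire} complement is determined up to the sign sequence $(\varepsilon_k)$ and the choice of one of the two rank-two subgroups — in other words, that there are no ``exotic'' complements of $M$ in $H$ beyond these. This amounts to a careful case analysis of the $6$ elements of $M$ against the ratio set $X$ inside the torsion-free-modulo-$\{\pm1\}$ group $H/\langle -1\rangle$, and verifying that the constraint graph (which pairs of elements of $H$ may not both lie in $B_1$) forces a unique ``staircase'' structure along multiplication by $8$. I would organize this by first treating $H_0=\langle -2/5,-4/3\rangle$ (resp.\ its variant) as the ``base fiber'', checking that $M\cdot H_0$ covers a single coset of $H_0$-index $6$, and then peeling off successive powers of $8$.
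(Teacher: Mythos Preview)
Your outline is correct and follows essentially the same route as the paper: reduce to $H=\langle -1,2,3,5\rangle$, use Lemma~\ref{kongji} with the ratio set $X$ to force the initial dichotomy $\{-\tfrac{2}{5},-\tfrac{4}{3}\}\subset B$ versus $\{-\tfrac{2}{3},-\tfrac{4}{5}\}\subset B$, then propagate inductively along powers of $8$ with a free sign $\varepsilon_k$ at each stage, and finally verify that the resulting set already satisfies $[-1,5]^*B_1=H$. The paper carries out exactly the element-by-element eliminations you describe abstractly (writing $-2,-4,-8,\ldots$ as $m\cdot b$ in all six ways and ruling out five of them via Lemma~\ref{kongji}(a)); the only point to tighten is that $B_1=B\cap H$ being a complement of $M$ in $H$ is not literally the statement of Lemma~\ref{only if subgroup} and needs the one-line observation that $M\subset H$ forces $b=m^{-1}h\in H$ whenever $h=mb$.
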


\begin{proof}
Suppose $[-1, 5]^*$ splits $\mathbb{Z}_p^*$ with a splitter set $B$, then
$$X=\frac{[-1,5]^*}{[-1,5]^*}\setminus \{1\}=\left\{\pm2, \pm3, \pm4, \pm5, \pm\frac{1}{2}, \pm\frac{1}{3}, \pm\frac{1}{4}, \pm\frac{1}{5}, \frac{2}{3}, \frac{3}{2},\frac{2}{5}, \frac{5}{2},\frac{3}{4}, \frac{4}{3}, \frac{3}{5}, \frac{5}{3},  \frac{4}{5}, \frac{5}{4},\right\}.$$
Since $$-2=(-1)\times 2=1\times (-2)=2\times (-1)=3\times (-\frac{2}{3})=4\times (-\frac{1}{2})=5\times (-\frac{2}{5}),$$
by Lemma \ref{kongji} (a) we see that
$$-\frac{2}{3}\in B, \ or \ -\frac{2}{5}\in B. $$
By $$-4=(-1)\times 4=1\times (-4)=2\times (-2)=3\times (-\frac{4}{3})=4\times (-1)=5\times (-\frac{4}{5}),$$
similarly,  by Lemma \ref{kongji} (a) again we get $$-\frac{4}{3}\in B, \ or \ -\frac{4}{5}\in  B. $$
Note that $-\frac{4}{3}=-\frac{2}{3}\times 2$ and $-\frac{4}{5}=-\frac{2}{5}\times 2$, hence we have
$$-\frac{2}{3},-\frac{4}{5}\in B \ or \ -\frac{2}{5},-\frac{4}{3}\in B.$$
For $-2=-\frac{2}{3}\times3 =-\frac{2}{5}\times5$, $-4=-\frac{4}{3}\times3 =-\frac{4}{5}\times 5$, $-\frac{8}{3}=-\frac{2}{3}\times 4=-\frac{4}{3}\times 2$ and $-\frac{8}{5}=-\frac{4}{5}\times 2=-\frac{2}{5}\times 4$, by Lemma \ref{kongji} (a) we have $-4,$ $-2$, $-\frac{8}{3}$, $-\frac{8}{5} \not\in B$.
Since $$-8=(-1)\times 8=1\times (-8)=2\times (-4)=3\times (-\frac{8}{3})=4\times (-2)=5\times (-\frac{8}{5}),$$
so $$-8, \ or \ 8\in  B. $$

Now we first consider the case where $-\frac{2}{5},$ $-\frac{4}{3}\in B$.
We will show that $$<-\frac{2}{5},-\frac{4}{3}>\subseteq B,$$
that is, $(-\frac{2}{5})^a(-\frac{4}{3})^b \in B$ for any non-negative integers $a$ and $b$.

The proof is by induction on $a+b$.
We have proved that $(-\frac{2}{5})^a(-\frac{4}{3})^b \in B$ for $a+b\leq 1.$ (Since $1\in B$.)
Assume that the result hold for all $a+b-1$.
We show that the result then holds for $a+b$.
This can be done by induction on $b$.
If $b=0$, set $M=(-\frac{2}{5})^{a-1}$,
then by induction $M,$ $(-\frac{4}{3})(-\frac{2}{5})^{a-2} \in B$. By Lemma \ref{kongji} (a), we have $-M$,  $2M,$ $-2M,$ $-\frac{1}{2}M$, $-\frac{2}{3}M\times 5=(-\frac{4}{3})(-\frac{2}{5})^{a-2}\times (-1) \not\in B$. Since
 $$-2M=(-1)\times (2M)=1\times (-2M)=2\times (-M)=3\times (-\frac{2}{3}M)=4\times (-\frac{1}{2}M)=5\times (-\frac{2}{5}M),$$
then $-\frac{2}{5}M=(-\frac{2}{5})^a\in B$.
Now suppose that $b>0$ and the result holds  for $b-1$.
Set $M=(-\frac{2}{5})^{a}(-\frac{4}{3})^{b-1}$.
Thus by induction $M,$ $(-\frac{2}{5})^{a+1}(-\frac{4}{3})^{b-1} \in B$. By Lemma \ref{kongji} (a), we have  $4M,$ $-4M,$ $-2M,$ $-M$, $-\frac{4}{5}M=(-\frac{2}{5})^{a+1}(-\frac{4}{3})^{b-1}\times 2\not\in B$.
Since $$-4M=(-1)\times (4M)=1\times (-4M)=2\times (-2M)=3\times (-\frac{4}{3}M)=4\times (-M)=5\times (-\frac{4}{5}M),$$
then $-\frac{4}{3}M=(-\frac{2}{5})^a(-\frac{4}{3})^{b}\in B$ and the proof is complete.

Next, we will use induction on $k$ to prove that
$$-8^k(-\frac{2}{5})^a(-\frac{4}{3})^b\in B \,\, \mbox{or}\,\, 8^k(-\frac{2}{5})^a(-\frac{4}{3})^b\in B$$
for any non-negative integers $a$, $b$ and $k$.

Set $N=8(-\frac{2}{5})^a(-\frac{4}{3})^b$, then we have
$$N=(-1)\times (-N)=1\times N=2\times (\frac{1}{2}N)=3\times (\frac{1}{3}N)=4\times (\frac{1}{4}N)=5\times (\frac{1}{5}N).$$
Observe that $$\frac{1}{2}N\times (-1)=(-\frac{2}{5})^a(-\frac{4}{3})^{b+1}\times 3,$$
$$\frac{1}{3}N\times (-1)=(-\frac{2}{5})^a(-\frac{4}{3})^{b+1}\times 2,$$
$$\frac{1}{4}N=(-\frac{2}{5})^a(-\frac{4}{3})^{b}\times 2$$
and
$$\frac{1}{5}N\times (-1)=(-\frac{2}{5})^{a+1}(-\frac{4}{3})^{b}\times 4.$$
Combining these results with $<-\frac{2}{5},-\frac{4}{3}>\subseteq B$ yields that $-N\in B$ or
$N\in B$. Hence we have proved that it is true for $k=1$. Suppose that $k>1$ and that it is true for $k-1$.
For $8^k(-\frac{2}{5})^a(-\frac{4}{3})^b$, by the same argument as above, we have
$$8^k(-\frac{2}{5})^a(-\frac{4}{3})^b\in B \,\, \mbox{or}\,\,-8^k(-\frac{2}{5})^a(-\frac{4}{3})^b\in B.$$

On the other hand, we observe that if $8^k(-\frac{2}{5})^a(-\frac{4}{3})^b\in B$, then $8^k(-\frac{2}{5})^a(-\frac{4}{3})^b\times(-\frac{2}{5})\not\in B$, $8^k(-\frac{2}{5})^a(-\frac{4}{3})^b\times(-\frac{4}{3})\not\in B$. That is, $-8^k(-\frac{2}{5})^{a+1}(-\frac{4}{3})^b\not\in B$, $-8^k(-\frac{2}{5})^a(-\frac{4}{3})^{b+1}\not\in B$. Hence
$8^k(-\frac{2}{5})^{a+1}(-\frac{4}{3})^b\in B$ and $8^k(-\frac{2}{5})^a(-\frac{4}{3})^{b+1}\in B$. Therefore we have proved that
$$8^k<(-\frac{2}{5}), (-\frac{4}{3})>\subset B.$$

Let
$$B_1=\bigcup_{k=0}^\infty\varepsilon_k8^k<(-\frac{2}{5}), (-\frac{4}{3})>\subseteq B, \quad \varepsilon_k=1 \, \mbox{or} \, -1.$$
For any element $(-1)^{\sigma}2^u3^v5^s\in <-1,2,3,5>$,
$$(-1)^{\sigma}2^u3^v5^s=(-\frac{2}{5})^{-s}(-\frac{4}{3})^{-v}(-1)^{\sigma+s+v}2^{s+2v+u}.$$
Set $s+2v+u=3k+t$ with $t\in \{0,1,2\}$, then $$(-1)^{\sigma}2^u3^v5^s=b\cdot r, \quad b\in B_1, \,\,r\in\{\pm1, \pm2, \pm4\}.$$
If $(-1)^{\sigma}2^u3^v5^s=b\cdot (-2), b\in B_1$, then $(-1)^{\sigma}2^u3^v5^s=b\cdot(-\frac{2}{5})\cdot5$. If  $(-1)^{\sigma}2^u3^v5^s=b\cdot (-4), b\in B_1$, then $(-1)^{\sigma}2^u3^v5^s=b\cdot(-\frac{4}{3})\cdot3$. Hence, we have $[-1, 5]^*B_1=<-1,2,3,5>$. By Lemma \ref{only if subgroup}, $B$ exists if and only if $[-1, 5]^*B_1=<-1,2,3,5>$ is a direct sum. Hence $B_1=\bigcup_{k=0}^\infty\varepsilon_k8^k<(-\frac{2}{5}), (-\frac{4}{3})>\subseteq B, \quad \varepsilon_k=1 \, \mbox{or} \, -1$.

If $-\frac{2}{3},-\frac{4}{5}\in B$, we obtain a similar result by the same argument. This completes the proof.

\end{proof}

{\bf Remark:} We can also obtain some similar results for other $M=[-k_1, k_2]^*$. 

For $B_1=\bigcup_{k=0}^\infty\varepsilon_k8^k<(-\frac{2}{5}), (-\frac{4}{3})>\subseteq B, \quad \varepsilon_k=1 \, \mbox{or} \, -1$, we take $k=6$, $r=4$, $a_1=-1$,  $a_2=2$, $a_3=3$, $a_4=5$, $\varepsilon_1=-1$, $\varepsilon_2=e^{i\pi/3}$, $\varepsilon_3=-e^{2i\pi/3}$, $\varepsilon_4=-e^{i\pi/3}$. By the Lemma in Mills \cite{[M]}, we have $a_1^{v_1}a_2^{v_2}a_3^{v_3}a_4^{v_4}=\beta^6, \beta\in\mathbb{Q}(e^{i\pi/3}), 1\le v_i\le6, i=1,2, 3, 4$ if and only if $a_1^{v_1}a_3^{v_3}=-3^3$ and $v_2=v_4=6$, hence $N(6, 4)=3(-(-e^{2i\pi/3})^3+1)=6>0$, where $N(6, 4)=\sum_{v_1=1}^6\sum_{v_1=2}^6\sum_{v_3=1}^6\sum_{v_4=1}^6(\varepsilon_1^{v_1}\varepsilon_2^{v_2}\varepsilon_3^{v_3}\varepsilon_4^{v_4})$, and the summation is over all $(v_1, v_2, v_3, v_4)$ such that $a_1^{v_1}a_2^{v_2}a_3^{v_3}a_4^{v_4}=\beta^6, \beta\in\mathbb{Q}(e^{i\pi/3}$. Therefore by Theorem 1 in Elliott \cite{E70}, there are infinitely many prime $p$ such that
$$\left(\frac{a_i}{p}\right)_6\equiv\varepsilon_i\pmod{p}, \quad i=1,2, 3, 4.$$
Finally, we prove that for the above prime $p$, $[-1, 5]^*$ splits $\mathbb{Z}_p$. To do this, we first prove that $[-1, 5]^*B'=<-1, 2, 3, 5>$, where $B'=\bigcup_{k=0}^\infty(-8)^k<(-\frac{2}{5}), (-\frac{4}{3})>$ is a factorization. If $m_1b_1=m_2b_2, m_1, m_2\in[-1, 5]^*, b_1, b_2\in B'$, then we have
$$\left(\frac{m_1b_1}{p}\right)_6=\left(\frac{m_2b_2}{p}\right)_6\pmod{p}.$$
A simple computation shows that $\left(\frac{m_1}{p}\right)_6=\left(\frac{m_2}{p}\right)_6$, which yields $m_1=m_2$, and thus $b_1=b_2$. This proves that  $[-1, 5]^*B'=<-1, 2, 3, 5>$ is a factorization, so  $[-1, 5]^*B_1=<-1, 2, 3, 5>$ is also a factorization. Therefore $[-1, 5]^*$ splits $\mathbb{Z}_p$ by Lemma \ref{only if subgroup}.

For $B_1=\bigcup_{k=0}^\infty\varepsilon_k8^k<(-\frac{4}{5}), (-\frac{2}{3})>\subseteq B, \quad \varepsilon_k=1 \, \mbox{or} \, -1$, we take $k=6$, $r=4$, $a_1=-1$,  $a_2=2$, $a_3=3$, $a_4=5$, $\varepsilon_1=-1$, $\varepsilon_2=e^{2i\pi/3}$, $\varepsilon_3=-e^{2i\pi/3}$, $\varepsilon_4=e^{i\pi/3}$. We obtain that $[-1, 5]^*B_1=<-1, 2, 3, 5>$ is  a factorization, and hence $[-1, 5]^*$ splits $\mathbb{Z}_p$.

{\bf Example 4.1:}  For the set $[-4, 4]^*$,  we take $k=16$, $r=3$, $a_1=-1$,  $a_2=2$, $a_3=3$, $\varepsilon_1=-1$, $\varepsilon_2=e^{i\pi/4}$, $\varepsilon_3=e^{3i\pi/4}$. By the Lemma in Mills \cite{[M]}, we have $a_1^{v_1}a_2^{v_2}a_3^{v_3}=\beta^{16}, \beta\in\mathbb{Q}(e^{i\pi/8}), 1\le v_i\le8, i=1,2, 3$ if and only if $a_1^{v_1}=1$, $v_2=8$ or $16$ and $v_3=16$, hence $N(16, 3)=8((e^{i\pi/4})^8+1)=16>0$. Therefore by Theorem 1 in Elliot \cite{E70}, there are infinitely many prime $p$ such that
$$\left(\frac{a_i}{p}\right)_{16}\equiv\varepsilon_i\pmod{p}, \quad i=1,2, 3.$$
Let $B$ be the kernel of the homomorphism $\varphi: \mathbb{Z}_p^*\to\mathbb{Z}_p^*$ given by $\varphi(a)\equiv a^{p-1/16}$, it is easy to check that $[-4, 4]^*B=\mathbb{Z}_p^*$ is a factorization, so $[-4, 4]^*$ splits $\mathbb{Z}_p$ with splitter set $B$.
\section*{Acknowledgments}
The author thanks Professor Qing Xiang
for useful comments and suggestions.


\end{document}